\newtcolorbox{probbox}{arc=6pt,
                      colback=white!100,
                      colframe=black!50,
                      before skip=6pt,
                      after skip=6pt,
                      boxsep=1pt,
                      left=6pt,
                      right=6pt,
                      top=4pt,
                      bottom=4pt}
\newcommand{\decprob}[3]{
   \begin{center}%
    \begin{minipage}{0.96\linewidth}%
      \begin{probbox}
      \textsc{#1}\\[0.2ex]
      \textbf{Input:} #2\\[0.2ex]
      \textbf{Question:} #3
      \end{probbox}
    \end{minipage}%
  \end{center}
}
\newcommand{\searchprob}[3]{
   \begin{center}%
    \begin{minipage}{0.96\linewidth}%
      \begin{probbox}
      \textsc{#1}\\[0.2ex]
      \textbf{Input:} #2\\[0.2ex]
      \textbf{Goal:} #3
      \end{probbox}
    \end{minipage}%
  \end{center}
}
\definecolor{greenish}{RGB}{27,158,119}
\definecolor{MyOrange}{RGB}{217,95,2}
\definecolor{MyPurple}{RGB}{117,112,179}
\newcommand{\suppp}{{\rm supp}\sp{+}}
\newcommand{\suppm}{{\rm supp}\sp{-}}
\newcommand{\MN}{M^\natural}
\def\NP{\mathsf{NP}}
\newcommand{\cH}{\mathcal{H}}
\theoremstyle{plain}
\newtheorem{thm}{Theorem}[section]
\newtheorem{lem}[thm]{Lemma}
\newtheorem{cor}[thm]{Corollary}
\newtheorem{prop}[thm]{Proposition}
\theoremstyle{definition}
\title{On the Complexity of Inverse Bivariate Multi-unit Assignment Valuation Problems}
\author{
Kristóf Bérczi\thanks{MTA-ELTE Matroid Optimization Research Group and HUN-REN–ELTE Egerváry Research Group, Department of Operations Research, Eötvös Loránd University, Budapest, Hungary. Email: \texttt{kristof.berczi@ttk.elte.hu}.}
\and
Lydia Mirabel Mendoza-Cadena\thanks{MTA-ELTE Matroid Optimization Research Group, Department of Operations Research, Eötvös Loránd University, Budapest, Hungary. Email: \texttt{lyd21@student.elte.hu}.}}
\date{}
\begin{document}
\maketitle
\tableofcontents

 \newpage

\begin{abstract} 
 Inverse and bilevel optimization problems play a central role in both theory and applications. These two classes are known to be closely related due to the pioneering work of Dempe and Lohse (2006), and thus have often been discussed together ever since. In this paper, we consider inverse problems for multi-unit assignment valuations. Multi-unit assignment valuations form a subclass of strong-substitutes valuations that can be represented by edge-weighted complete bipartite graphs. These valuations play a key role in auction theory as the strong substitutes condition implies the existence of a Walrasian equilibrium. A recent line of research concentrated on the problem of deciding whether a bivariate valuation function is an assignment valuation or not. In this paper, we consider an \emph{inverse} variant of the problem: we are given a bivariate function $g$, and our goal is to find a bivariate multi-unit assignment valuation function $f$ that is as close to $g$ as possible. The difference between $f$ and $g$ can be measured either in $\ell_1$- or $\ell_\infty$-norm. Using tools from discrete convex analysis, we show that the problem is strongly $\NP$-hard. On the other hand, we derive linear programming formulations that solve relaxed versions of the problem. 

\medskip

\noindent \textbf{Keywords:} Assignment valuations, Inverse problems, Linear programming, $M^\natural$-concave sets, Product-mix auction

\end{abstract}

\section{Introduction}

Multi-unit assignment valuations generalize single-unit ones by extending their domain from zero-one vectors to non-negative integer vectors. As a result, the gross-substitutes condition \cite{kelso1982job,gul1999walrasian} for single-unit assignment valuations transforms into the so-called \emph{strong-substitutes} condition \cite{murota2003discrete}, which in turn implies strong structural properties, e.g. it ensures the existence of a Walrasian equilibrium in the market. For sake of simplicity, we refer to the multi-unit version of these functions as \emph{assignment valuations} throughout since this may cause no confusion.

Motivated by the `product-mix' auction used in Bank of England \cite{klemperer2010auction}, Otsuka and Shioura~\cite{otsuka2023assignment} considered the so-called Bivariate Assignment Valuation Checking Problem (BAVCP): Given a bivariate function $g$, decide if $g$ is an assignment valuation. They developed an algorithm for the problem that, if the answer is `\texttt{Yes}', outputs a weighted bipartite graph representing $g$. Their approach is based on characterizing assignment valuations in terms of maximizer sets and uses basic tools of discrete convex analysis. 
However, for an instance of BAVCP, the function $g$ not being an assignment valuation may be caused by data inaccuracies in the input, e.g., when the values are coming from certain physical measurements. The domain of the function increases quadratically in the total supply, while a `\texttt{No}' answer may stem even from a very small subset of wrong data. Therefore, the fact that $g$ is not an assignment valuation allows for little to no conclusions. This motivates the study of the problem in the context of inverse optimization.

\emph{Inverse problems} form a rich class of questions that has significant interest in the past decades due to its applicability in both theory and practice, see~\cite{richter2016inverse} for an introduction. These problems can be categorized into two main classes. In \emph{parameter estimation}, the goal is to determine certain parameters of a system that are consistent with a set of observations -- an illustrative example is seismic tomography, see e.g.~\cite{nolet1987seismic}. In \emph{solution imposition}, the goal is to modify the system parameters as little as possible so as to enforce a set of solutions -- a classical example is the inverse shortest path problem studied by Burton and Toint~\cite{burton1992instance}. 

A relation between inverse problems and \emph{bilevel programming} was first established in the 2000s, when Dempe and Lohse~\cite{dempe2006Inverse} deduced the optimality conditions for an inverse linear programming problem by modelling it as a bilevel programming question. Bilevel optimization programs are used to model decision processes with two decision makers: the \emph{leader} modifies the problem of the \emph{follower} with her decision, but the value of this decision is influenced by the decision of the follower. The task is to determine a strategy for the leader that optimizes the objective value. However, the follower's strategy for breaking ties among optimal solutions is not specified, though it heavily affects the leader's objective. In a \emph{pessimistic} formulation, the follower behaves non-cooperatively and takes a solution against the leader. In an \emph{optimistic} formulation, the follower behaves in a cooperative way and takes a solution that is most beneficial for the leader. Dempe and Zemkoho~\cite[Chapter 20]{dempe2020bilevel} explained that for optimistic formulations, the upper-level objective function is often of target type and chosen in order to reconstruct parameters in the lower-level parametric optimization problem from given observations. This allows the hierarchical model to be considered as an inverse optimization problem, which  often happens in hyperparameter learning and bilevel optimal control~\cite[Chapters 6, 16]{dempe2020bilevel}. Moreover, Sabach and Shtern~\cite{Sabach2017first} considered convex bilevel optimization problems and used their algorithm for solving a linear inverse problem as an example.

Let us briefly review some basic complexity results regarding inverse problems and bilevel optimization. It is known that an inverse problem can be $\NP$-hard, even if its underlying optimization problem is in $\mathsf{P}$, as shown by Cai,  Yang, and Zhang~\cite{cai1999complexity}. In bilevel optimization, Jeroslow~\cite{jeroslow1985polynomial} showed that the linear bilevel program, which is linear in both the upper- and lower-levels and has continuous variables, is $\NP$-hard, and a simpler proof was later given by Ben-Ayed and Blair~\cite{ben1990computational}. This result was further strengthened by Hansen, Jaumard and Savard~\cite{hansen1992new} who proved strong $\NP$-hardness. In fact, somewhat unexpectedly compared to other areas of optimization, bilevel optimization problems may be infeasible even when all functions are continuous over compact sets~\cite[Chapter 10]{dempe2020bilevel}. Furthermore,  Vicente, Savard and Júdice~\cite{vicente1994descent} proved that verifying local optimality in linear bilevel programming is an $\NP$-hard problem. Therefore, it is quite common to set assumptions that guarantee the existence of solutions. For example, the problem is in $\mathsf{P}$ when the number of variables for the follower is bounded by a constant, as shown by Deng~\cite{deng1998complexity}.

\medskip

The scope of this work is to study an inverse counterpart BAVCP as well as some relaxations of the problem. Assignment valuations can be represented by edge-weighted complete bipartite graphs; for the precise definition, see Section~\ref{sec:preliminaries}. In real world problems, the vertex classes (i.e. the sets of `agents' and `goods') and the supplies for the agents are usually known while the weight function is only estimated, which may lead to inaccuracies. This motivates the following problem, which is a natural inverse counterpart of BAVCP: Given a bivariate function $g$ and a set of agents together with their supplies, find a weight function for the agent-item pairs such that the induced assignment valuation $f$ is as \emph{close} to the input function $g$ as possible. The difference between $f$ and $g$ can be measured in several ways; in this paper, we focus on the $\ell_1$- and $\ell_\infty$-norm objectives.

\medskip

The paper is structured as follows. Section~\ref{sec:preliminaries} summarizes basic definitions and notation, together with an overview of structural results on $\MN$-sets and $\MN$-concave functions. In Section~\ref{sec:irred_repr}, we analyze properties of complete weighted bipartite graphs that represent the same assignment valuation. We prove strongly $\NP$-hardness of the proposed inverse problem in Section~\ref{sec:inverse_with_supply}. Finally, Section~\ref{sec:inverse_hexagons} presents a relaxation.

\section{Preliminaries}
\label{sec:preliminaries}

The purpose of this section is to give an overview of basic definitions and notation as well as introducing basic properties of $\MN$-convex sets and $\MN$-concave functions; for further details, we refer the interested reader to \cite{murota2003discrete,otsuka2023assignment}.

\subsection{Basic notation} 
\label{sec:basicNotation}

The sets of \emph{real}, \emph{integer}, \emph{non-negative integer}, and \emph{positive integer numbers} are denoted by $\mathbb{R}$, $\mathbb{Z}$, $\mathbb{Z}_{\geq 0}$ and $\mathbb{Z}_{> 0}$, respectively. For a positive integer $n$, we use $[n]\coloneqq \{1,\dots,n\}$. Given a vector $x\in\mathbb{R}^n$, we denote its \emph{$i$th coordinate} by $x^i$. We further use $\suppp(x)=\{i\in[n] \mid x^i>0 \}$ and $\suppm(x)=\{i\in[n] \mid x^i<0 \}$ for denoting the \emph{sets of positive} and \emph{negative} coordinates of $x$, respectively. For an index $i\in[n]$, the \emph{characteristic vector of $i$} is denoted by $\chi_i\in\{0,1\}^n$, that is, $\chi_i^j=1$ if $i=j$ and $0$ otherwise. For ease of notation, we use $\chi_0$ to denote the \emph{all-zero vector}. For a set $S\subseteq\mathbb{Z}^n$ and function $f\colon S \to \mathbb{R} \cup \{-\infty\}$, we use ${\rm dom} f=\{x \in S | f(x)>-\infty \}$. For a non-negative integer $\Phi\in\mathbb{Z}_{> 0}$, we use $T_\Phi = \{ x \in \mathbb{Z}^2_{\geq 0} \, \vert \,  x^1 + x^2 \leq \Phi \}$. Throughout the paper, we use $N=\{1,2\}$. Given a family $\cH$ of sets, we denote by $\cH^\cup$ the \emph{union closure} of $\cH$ defined as $\cH^\cup = \{ \bigcup_{H\in\cH'} H \mid \cH'\subseteq \cH \}$.

\subsection{3-partition problem} 

In the 3-partition problem (\textsc{$3$-Partition}), the goal is to decide whether a given set of integers can be partitioned into triplets all having the same sum. 

\decprob{$3$-Partition}{A set $A=\{a_1,\dots,a_{3k}\}$ of $3k$ different positive integers, and $T=(\sum_{i=1}^{3k}a_i)/k$.}{Decide if $A$ can be partitioned into $k$ disjoint triplets $A_1,\dots,A_k$ such that $\sum_{a\in A_i}a=T$ for $i\in[k]$.}

\textsc{$3$-Partition} is known to be strongly $\NP$-complete~\cite[Problem SP15]{garey1983computers}, that is, the problem is $\NP$-complete even if the the numbers in the input are represented in unary. It is also known that the problem remains strongly $\NP$-complete if $T/4 < a_i < T/2$ holds for $i\in[3k]$. 

\subsection{\texorpdfstring{$\MN$}{Mn}-convex sets} 
\label{sec:convex-sets}

To make the paper self-contained, we repeat the most fundamental definitions and results from \cite{otsuka2023assignment}. A set $S\subseteq \mathbb{Z}^n$ is \emph{$\MN$-convex} if for every $x,y\in S$ and $i\in\suppp(x-y)$ there exists $j\in\suppm(x-y)\cup\{0\}$ such that both $x-\chi_i+\chi_j$ and $y+\chi_i-\chi_j$ are in $S$. This property is usually called the \emph{exchange property}. In two dimensions, $\MN$-convex sets have a simple characterization, as shown by the following folklore result.

\begin{prop}[{\cite[Proposition 2.3]{otsuka2023assignment}}] \label{prop:sets_as_ineq}
A bounded set $S \subseteq \mathbb{Z}^2$ is $\MN$-convex if and only if it can be represented by a system of inequalities of the following form:
\begin{align*}
        S = \{ x \in \mathbb{Z}^2 \, \vert \, \lambda_1 \leq x^1 \leq \mu_1,\ \lambda_2 \leq x^2 \leq \mu_2,\ \lambda_0 \leq x^1 + x^2 \leq \mu_0 \}. \label{eq:convex_sets_inequalities}
\end{align*}
\end{prop}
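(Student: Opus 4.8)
The proof naturally splits into two directions. For the easy direction, I would verify that any set $S$ of the stated form -- defined by box constraints $\lambda_1 \le x^1 \le \mu_1$, $\lambda_2 \le x^2 \le \mu_2$ together with the diagonal constraint $\lambda_0 \le x^1+x^2 \le \mu_0$ -- satisfies the exchange property. Given $x,y \in S$ and $i \in \suppp(x-y)$ (say $i=1$, the case $i=2$ being symmetric), I must exhibit $j \in \suppm(x-y)\cup\{0\}$ with $x-\chi_1+\chi_j \in S$ and $y+\chi_1-\chi_j \in S$. The candidates are $j=2$ (if $x^2 > y^2$) and $j=0$. Decreasing $x^1$ by one keeps $x^1 \ge \lambda_1$ since $x^1 > y^1 \ge \lambda_1$; the question is whether $j=0$ or $j=2$ is needed to maintain the diagonal and the other box constraint. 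If $x^2 > y^2$, one checks $j=2$ works: the coordinate sum is unchanged for both points, $x^2-1 \ge y^2 \ge \lambda_2$ on one side and $y^2+1 \le x^2 \le \mu_2$ on the other, and similarly for the $x^1$-bound on $y$. If instead $x^2 \le y^2$, then $\suppm(x-y) \subseteq \{2\}$ forces it to actually be empty or we use $j=0$: here the sum decreases by one on the $x$-side (still $\ge \lambda_0$ because... ) -- this is the one spot requiring a short case check, using that when $x^1 > y^1$ and $x^2 \le y^2$ we have $x^1 + x^2$ possibly equal to $\lambda_0$; I would argue that in that case $j=0$ may fail but then necessarily $x^2 > y^2$ after all, or handle it by noting $y+\chi_1$ has sum $y^1+x^2+1 \le$ something. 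I expect this direction to be a clean finite verification once the cases are organized by the signs of $x^1-y^1$ and $x^2-y^2$.

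For the harder direction, suppose $S \subseteq \mathbb{Z}^2$ is bounded and $\MN$-convex; I want to show it has the claimed form. First I would set $\lambda_1 = \min\{x^1 : x \in S\}$, $\mu_1 = \max\{x^1 : x\in S\}$, and analogously $\lambda_2,\mu_2$ for the second coordinate and $\lambda_0,\mu_0$ for the coordinate sum $x^1+x^2$; these are well-defined by boundedness and nonemptiness. The inclusion $S \subseteq \{x : \lambda_1 \le x^1 \le \mu_1,\ \lambda_2 \le x^2 \le \mu_2,\ \lambda_0 \le x^1+x^2 \le \mu_0\}$ is immediate. The content is the reverse inclusion: every integer point $x$ satisfying all six inequalities lies in $S$. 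The main tool is to use the exchange property repeatedly to "walk" from a known point of $S$ to $x$ along unit steps that stay inside $S$.

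The crux -- and what I expect to be the main obstacle -- is organizing this walk so that it provably stays in $S$. One natural strategy: take $x$ in the hexagon (the inequality set), pick $y \in S$ achieving, say, a coordinate-wise extreme relevant to $x$, and apply the exchange axiom to the pair $(x', y)$ for a current approximation $x' \in S$. But $x$ itself is not yet known to be in $S$, so one cannot directly feed $x$ into the exchange property. Instead I would argue by a minimal-counterexample / descent argument: among all points of the hexagon not in $S$, pick one, say $x$, minimizing $\|x - S\|$ in $\ell_1$ (distance to the set), and let $z \in S$ be a closest point; then apply the exchange property to $z$ and a genuine point of $S$ chosen to lie "beyond" $x$ in a suitable direction (existence of such a point follows from $x$ satisfying the extremal inequalities and the definitions of $\lambda_i,\mu_i$), producing a point of $S$ strictly closer to $x$ -- contradiction. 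Making "beyond $x$" precise so that a single exchange step genuinely decreases the distance is the delicate part; it may require splitting into cases according to which of the two coordinates of $z - x$ is nonzero and invoking the points of $S$ that realize the tight bound on $x^1$, $x^2$, or $x^1+x^2$ to guarantee the swap index $j$ can be chosen to move toward $x$ without leaving $S$. Since this is a "folklore result" the authors likely give a short argument of exactly this flavor, perhaps streamlined by first reducing to the case $\lambda_0 = 0$ via translation, or by citing the general fact that the integer points of an $\MN$-convex set are exactly the integer points of its convex hull and that the convex hull of an $\MN$-convex set in $\mathbb{R}^2$ is cut out by constraints of these three types.
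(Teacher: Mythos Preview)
The paper does not supply a proof of this proposition at all: it is quoted verbatim from \cite[Proposition~2.3]{otsuka2023assignment} and labelled a ``folklore result'', with no argument given in the present paper. There is therefore nothing to compare your proposal against here.

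That said, a quick remark on your write-up of the easy direction. You have the sign condition on $j=2$ reversed: for $j=2$ to be an admissible choice in the exchange axiom one needs $2\in\suppm(x-y)$, i.e.\ $x^2<y^2$, not $x^2>y^2$. Correspondingly, with $i=1$ and $j=2$ the new points are $x-\chi_1+\chi_2$ (second coordinate $x^2+1$) and $y+\chi_1-\chi_2$ (second coordinate $y^2-1$), so the bounds to check are $x^2+1\le y^2\le\mu_2$ and $y^2-1\ge x^2\ge\lambda_2$, both of which indeed follow from $x^2<y^2$. Once this slip is fixed, the case split should read: if $x^2<y^2$ take $j=2$ (coordinate sums unchanged, box constraints as above); if $x^2\ge y^2$ take $j=0$, and then $x^1+x^2-1\ge y^1+y^2\ge\lambda_0$ and $y^1+y^2+1\le x^1+x^2\le\mu_0$ follow from $x^1>y^1$ and $x^2\ge y^2$. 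Your outline for the converse via a closest-point / minimal-counterexample descent is a standard and workable route.
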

We may assume that the inequalities are tight, that is, $\lambda_i=\min\{x^i\mid x\in S\}$, $\mu_i=\max\{x^i\mid x\in S\}$ for $i=1,2$, and $\lambda_0=\min\{x^1+x^2\mid x\in S\}$, $\mu_0=\max\{x^1+x^2\mid x\in S\}$. By Proposition~\ref{prop:sets_as_ineq}, for the \emph{convex hull} $\operatorname{conv}(S)$ of $S$, we have $\operatorname{conv}(S) \cap \mathbb{Z}^2 =S$. Furthermore, $\operatorname{conv}(S)$ is the convex hull of the vertices 
\begin{equation*}
    (\lambda_1, \mu_2), \, (\mu_0 - \mu_2, \mu_2), \, (\mu_1, \mu_0 - \mu_1), \, (\mu_1,\lambda_2), \, (\lambda_0 - \lambda_2, \lambda_2), \, (\lambda_1, \lambda_0 - \lambda_1),
\end{equation*}
and so $\operatorname{conv}(S)$ is a hexagon; see Figure~\ref{fig:MN_convex_sets} for an example. Note that some vertices of the hexagon may coincide, in which case we call the hexagon \emph{degenerate}. The \emph{edges} of $S$ and their \emph{lengths} are then defined as
\begin{itemize}\itemsep0em
    \item upper-horizontal (UH) edge, with $\ell_{\text{UH}}(S) =(\mu_0 - \mu_2) - \lambda_1 $,
    \item  lower-horizontal (LH) edge, with $\ell_{\text{LH}}(S) = \mu_1 - (\lambda_0 - \lambda_2)$,
    \item left-vertical (LV) edge, with $\ell_{\text{LV}}(S) = \mu_2 - (\lambda_0 - \lambda_1)$, 
    \item right-vertical (RV) edge, with $\ell_{\text{RV}}(S) = (\mu_0 - \mu_1) - \lambda_2$,
    \item upper-right-diagonal (URD) edge, with $\ell_{\text{URD}}(S) = \sqrt{2} ( \mu_1 - (\mu_0 - \mu_2) ) $,
    \item lower-left-diagonal (LLD) edge  with $\ell_{\text{LLD}}(S) = \sqrt{2} (( \lambda_0 - \lambda_2) - \lambda_1) $.       
\end{itemize}

\begin{figure}[t]
    \centering
    \begin{tikzpicture}[scale=.7]
        \node[fill,circle,scale=0.7, label={above:\scriptsize$(\lambda_1, \mu_2)$}] (h1) at (1,5) { };
        \node[fill,circle,scale=0.7,label={above right:\scriptsize$(\mu_0 - \mu_2, \mu_2)$}] (h2) at (4,5) {};
        \node[fill,circle,scale=0.7,label={above right:\scriptsize$(\mu_1, \mu_0 - \mu_1)$}] (h3) at (6,3) {};
        \node[fill,circle,scale=0.7,label={below right:\scriptsize$(\mu_1,\lambda_2)$}] (h4) at (6,1) {};
        \node[fill,circle,scale=0.7,label={below left:\scriptsize$(\lambda_0 - \lambda_2, \lambda_2)$}] (h5) at (2,1) {};
        \node[fill,circle,scale=0.7,label={above left:\scriptsize$(\lambda_1, \lambda_0 - \lambda1)$}] (h6) at (1,2) {};
        
        \draw[MyPurple, line width = 1.5pt] (h1)--(h2) node[midway,above] {\textbf{UH}};
        \draw[MyPurple, line width = 1.5pt] (h4)--(h5) node[midway,below] {\textbf{LH}};
        \draw[greenish, line width = 1.5pt] (h1)--(h6) node[midway,left] {\textbf{LV}};
        \draw[greenish, line width = 1.4pt] (h3)--(h4) node[midway,above right] {\textbf{RV}};
        \draw[MyOrange, line width = 1.5pt] (h6)--(h5) node[midway,left] {\textbf{LLD}};
        \draw[MyOrange, line width = 1.5pt] (h2)--(h3) node[midway,above right] {\textbf{URD}};
            
        \foreach \x in {0,1,2,3,4,5,6,7}{%
            \foreach \y in {0,1,2,3,4,5,6}
                {\draw [fill = black] (\x,\y) circle (2pt);}
        };
    \end{tikzpicture}
    \caption{An $\MN$-convex set of positive-type with edge and vertex labels.}
    \label{fig:MN_convex_sets}
\end{figure}
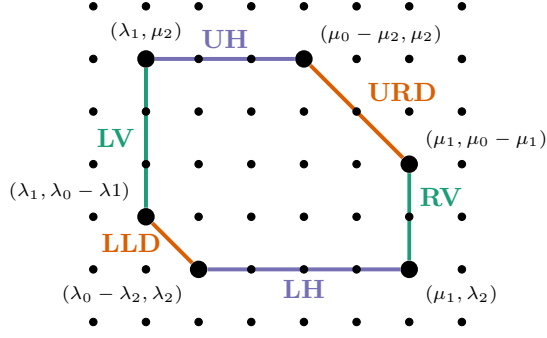

Otsuka and Shioura \cite{otsuka2023assignment} observed that for a two-dimensional $\MN$-convex set $S \subseteq \mathbb{Z}^2$, we have $\ell_{\text{LH}}(S) - \ell_{\text{UH}}(S) =\ell_{\text{LV}}(S) - \ell_{\text{RV}}(S) =\frac{1}{\sqrt{2}} ( \ell_{\text{URD}}(S) - \ell_{\text{LLD}}(S))$. This result leads to a key definition: we say that $S$ is of \emph{positive-type} if  $\ell_{\text{LH}}(S) - \ell_{\text{UH}}(S) > 0$, $S$ is of  \emph{negative-type} if $\ell_{\text{LH}}(S) - \ell_{\text{UH}}(S) < 0 $, and $S$ is of \emph{zero-type} if $\ell_{\text{LH}}(S) - \ell_{\text{UH}}(S) = 0$.

\subsection{\texorpdfstring{$\MN$}{Mn}-concave functions}
\label{sec:concave-functions}

M${}^\natural$-concave functions are introduced by Murota and Shioura~\cite{MurotaShioura1999} and play a central role in the theory of discrete convex analysis. For a set $S\subseteq\mathbb{Z}^n$, a function $f\colon S \to \mathbb{R} \cup \{-\infty\}$ is said to be {\em M${}^\natural$-concave} if it satisfies the following \emph{exchange property}: 
\begin{description}
\item[(M${}^\natural$-EXC)] \label{exc}
$\forall x,y \in {\rm dom} f,\ \forall i \in \suppp(x-y),\ \exists j \in \suppm(x-y) \cup \{0\}$: 
$$
f(x)+f(y) \le f(x-\chi_i+\chi_j) + f(y+\chi_i-\chi_j).  
$$
\end{description}

For a positive integer $\Phi\in\mathbb{Z}_{>0}$, let $T_\Phi = \{ x \in \mathbb{Z}^2_{\geq 0} \, \vert \,  x^1 + x^2 \leq \Phi \}$. A function $f\colon T_\Phi\to\mathbb{R}$ can be regarded as a function on $\mathbb{Z}^2$ by setting $f(x)=-\infty$ for $x \in \mathbb{Z}^2 \setminus T_\Phi$. For simplicity, we say that $f$ is $\MN$-concave if this extended function is $\MN$-concave. For bivariate $\MN$-concave functions, the next characterization holds.

\begin{prop}[{\cite[Proposition 2.2]{otsuka2023assignment}}]\label{prop:MN_concave_function_ineq_characterization}
A bivariate function $f\colon  T_\Phi \to \mathbb{R}$ is $\MN$-concave if and only if the following hold for every $k,h\in\mathbb{Z}_{\geq 0}$ with $k+h\leq \Phi-2$:
    \begin{align*}
        f(k,h) + f(k+1,h+1) &\leq f(k+1,h) + f(k,h+1), \\
        f(k,h+1) + f(k+2,h) &\leq f(k+1,h+1) + f(k+1,h), \\
        f(k+1,h) + f(k,h+2) &\leq f(k+1,h+1) + f(k,h+1). 
    \end{align*}
\end{prop}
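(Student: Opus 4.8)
I would prove the two implications separately, the ``only if'' direction being the easy one.

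For ``only if'', suppose $f$ is $\MN$-concave. Each of the three inequalities comes from a single application of the exchange property (M${}^\natural$-EXC) at a carefully chosen pair. For the first inequality take $x=(k+1,h+1)$, $y=(k,h)$ and $i=1$: since $\suppm(x-y)=\emptyset$, the index $j=0$ is forced and the exchange inequality is precisely $f(k,h)+f(k+1,h+1)\le f(k+1,h)+f(k,h+1)$. For the second inequality take $x=(k+2,h)$, $y=(k,h+1)$ and $i=1$: now $\suppm(x-y)=\{2\}$, so (M${}^\natural$-EXC) yields some $j\in\{0,2\}$, but both values of $j$ give $f(k+1,h)+f(k+1,h+1)$ on the right-hand side, so the claimed inequality follows regardless. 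The third inequality is obtained symmetrically from $x=(k,h+2)$, $y=(k+1,h)$ and $i=2$. One only has to observe that the chosen pairs lie in $T_\Phi$ exactly when $k+h\le\Phi-2$.

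For ``if'', assume the three inequalities hold whenever $k+h\le\Phi-2$, and verify (M${}^\natural$-EXC). I would first restate the hypotheses in terms of the difference operators $\Delta_1 f(p)=f(p+\chi_1)-f(p)$, $\Delta_2 f(p)=f(p+\chi_2)-f(p)$ and the diagonal difference $\Delta_{12}f(p)=f(p+\chi_1-\chi_2)-f(p)$. The first inequality says that $\Delta_1 f$ is non-increasing in the $\chi_2$-direction (and $\Delta_2 f$ in the $\chi_1$-direction); combining the first and second inequalities yields discrete concavity along the first axis, i.e. $\Delta_1 f$ is also non-increasing in the $\chi_1$-direction; and the second and third inequalities say precisely that $\Delta_{12}f$ is non-increasing in the $\chi_1$-direction and in the $(-\chi_2)$-direction, respectively. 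Consequently $\Delta_1 f$ is non-increasing along every nonnegative lattice direction, and $\Delta_{12}f$ along every direction $a\chi_1-b\chi_2$ with $a,b\ge0$. Now let $x,y\in T_\Phi$ and $i\in\suppp(x-y)$; since the system of inequalities and $T_\Phi$ are symmetric under swapping the two coordinates, assume $i=1$, and write $d=x-y$ with $d^1\ge1$. If $d^2\ge0$ then $\suppm(x-y)=\emptyset$, $j=0$ is forced, and the required inequality $f(x)+f(y)\le f(x-\chi_1)+f(y+\chi_1)$ is exactly $\Delta_1 f(x-\chi_1)\le\Delta_1 f(y)$, which holds since $x-\chi_1=y+(d^1-1,d^2)$ with $d^1-1,d^2\ge0$. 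If $d^2\le-1$ then $\suppm(x-y)=\{2\}$ and I would always take $j=2$: the inequality $f(x)+f(y)\le f(x-\chi_1+\chi_2)+f(y+\chi_1-\chi_2)$ rewrites as $\Delta_{12}f(x-\chi_1+\chi_2)\le\Delta_{12}f(y)$, and $x-\chi_1+\chi_2=y+(d^1-1,d^2+1)$ is reached from $y$ using $d^1-1\ge0$ steps in direction $\chi_1$ and $-(d^2+1)\ge0$ steps in direction $-\chi_2$, so the monotonicity of $\Delta_{12}f$ gives the claim.

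The step that needs the most care is making these telescoping arguments rigorous near the boundary $x^1+x^2=\Phi$: one has to check that the intermediate lattice points remain in $T_\Phi$ and that the invoked instances of inequalities 1--3 fall in the allowed range $k+h\le\Phi-2$. This works out if the path from $y$ to the target is traversed so that $x^1+x^2$ increases as late as possible (e.g. performing the $(-\chi_2)$-steps before the $\chi_1$-steps when $d^2\le-1$); whenever an instance of an inequality formally lies outside the stated range it holds trivially because one of its terms equals $-\infty$, while the differences $\Delta_1 f$ and $\Delta_{12}f$ that we actually compare stay finite along the path, so the chain of inequalities goes through. Conceptually, all of this amounts to the statement that inequalities 1, 2 and 3 are exactly the conditions for the piecewise-linear interpolation of $f$ obtained by cutting each unit square $[k,k+1]\times[h,h+1]\subseteq\operatorname{conv}(T_\Phi)$ along the diagonal from $(k+1,h)$ to $(k,h+1)$ to be concave, which is a known two-dimensional reformulation of $\MN$-concavity; the argument sketched above is its elementary self-contained form.
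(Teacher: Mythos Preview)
The paper does not prove this proposition: it is quoted verbatim from Otsuka and Shioura~\cite{otsuka2023assignment} as a preliminary result and no argument is given, so there is no ``paper's own proof'' to compare your attempt against.

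That said, your proof is sound. The ``only if'' direction is handled cleanly; the only minor slip is in the second inequality, where the two choices $j=0$ and $j=2$ do give the same right-hand side but for slightly different reasons than you state (one gives $f(k+1,h)+f(k+1,h+1)$, the other $f(k+1,h+1)+f(k+1,h)$), so the conclusion is fine. For the ``if'' direction your strategy of reformulating the three inequalities as monotonicity of $\Delta_1 f$ and $\Delta_{12}f$ and then telescoping along lattice paths is correct, and your choice of path order (doing $-\chi_2$ steps before $\chi_1$ steps when $d^2\le-1$; doing $\chi_1$ steps before $\chi_2$ steps when $d^2\ge0$) indeed keeps all intermediate points inside $T_\Phi$ and all invoked instances of the three inequalities within the range $k+h\le\Phi-2$. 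The hand-waving remark about terms equalling $-\infty$ is unnecessary once the path order is fixed, since then every quantity in the chain is finite; you could drop that sentence without loss.
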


For a bivariate function $f\colon  T_\Phi \to \mathbb{R}$ and a vector $p \in \mathbb{R}^2$, we define the \emph{maximizer set} $D_f(p) \subseteq T_\Phi$ as
\begin{equation}
    D_f(p) \coloneqq \{ x \in T_\Phi \, \vert \, f(x) - \langle p,x \rangle  \geq f(y) - \langle p,y \rangle\ \text{for every $y \in T_\Phi$} \}. \label{eq:maximizerD}
\end{equation}
If $D \subseteq T_\Phi$ is a two-dimensional maximizer set of $f$, then there exists a unique vector $p \in \mathbb{R}^2$ so that $D=D_f(p)$. The vector $p$ is called the \emph{slope vector} of $D$. Then, $\MN$-concave functions can be characterized also in terms of maximizer sets~\cite[Proposition 2.5]{otsuka2023assignment}: a bivariate function $f\colon  T_\Phi \to \mathbb{R}$ is $\MN$-concave if and only if $D_f(p)$ is an $\MN$-convex set for every $p \in \mathbb{R}^2$. For an $\MN$-concave function $f$, we denote its \emph{family of two-dimensional maximizers} by $\mathcal{D}_f$.

\subsection{Multi-unit assignment valuations}
\label{sec:assignment_valuations}

We consider combinatorial markets consisting of a set $V=[n]$ of agents, a set $N=\{1,2\}$ of two multiple indivisible goods, weight function $w\colon V\times N\to\mathbb{R}$, and a strictly positive supply function $\varphi\colon V\to\mathbb{Z}_{>0}$ that describes the agents' maximum demands. Note that such a market can be represented by an edge-weighted complete bipartite graph $G=(V,N; V \times N)$ with weight function $w:V \times N \to \mathbb{R}$ and supply function $\varphi:V\to \mathbb{Z}_{>0}$. Let $\Phi = \sum_{i \in V} \varphi (i)$. Then, the corresponding \emph{assignment valuation} is $f\colon T_\Phi\to\mathbb{R}$, where
\begin{align}
    f(x) = \max  \bigg\{ \sum_{i \in V} w(i,1)\cdot y^1_i + w(i,2)\cdot y^2_i\,\bigg\vert\, & \sum_{i \in V} y^j_i = x^j\quad \text{for $j\in N$}, \nonumber \\
    & y^1_i + y^2_i \leq \varphi(i)\quad \text{for $i \in V$}, \nonumber  \\
    & y^j_i \in \mathbb{Z}_{\geq 0}\quad \text{for $j\in N$, $i\in V$}
    \bigg\}. \label{eq:assignment_valuation_def}
\end{align}
In other words, for $x\in T_\Phi\subseteq \mathbb{Z}_{> 0}^2$, the value $f(x)$ equal the maximum $w$-weight of a subset $F\subseteq V\times N$ of edges such that every vertex $i\in V$ has degree at most $\varphi(i)$ and every vertex $j\in N$ has degree exactly $x^j$ in $F$. For a fixed $x\in T_\Phi$, computing the value of $f(x)$ is called a \emph{maximum weight $b$-matching problem} that can be solved efficiently; see~\cite[Chapter 31]{schrijver2003combinatorial} for further details. We say that $(G,w,\varphi)$ is a \emph{representation} of the function $f$. Assignment valuations are known to be $\MN$-concave, see e.g.~\cite{murota2022discrete, murota2022discreteConvex}. Otsuka and Shioura~\cite{otsuka2023assignment} gave necessary and sufficient conditions for a function to be an assignment valuation. 

\begin{prop}[{\cite[Corollary 3.5]{otsuka2023assignment}}] \label{prop:characterization_assig_val}
    An $\MN$-concave function $f\colon T_\Phi \to \mathbb{R}$ with $f(0,0) = 0$ is an assignment valuation if and only if every two-dimensional maximizer set of $f$ is an $\MN$-convex set of positive- or zero-type, and there exists at least one having positive-type.
\end{prop}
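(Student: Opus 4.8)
The proof splits into the two implications, and the plan is the following.

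\smallskip
\noindent\emph{Forward direction.} Suppose $f$ is an assignment valuation with representation $(G,w,\varphi)$, $V=[n]$. The crucial point is that the maximum weight $b$-matching problem defining $f$ decouples over the agents, so that for every $p\in\mathbb{R}^2$ one has the Minkowski-sum decomposition $D_f(p)=\sum_{i\in V}D_i(p)$, where $D_i(p)=\operatorname{argmax}\{\langle w(i,\cdot)-p,y\rangle : y\in T_{\varphi(i)}\}$ is the single-agent maximizer set. A short case distinction on the signs of $w(i,1)-p^1$ and $w(i,2)-p^2$ shows that each $D_i(p)$ is a vertex of the triangle $T_{\varphi(i)}$, one of its three edges, or all of $T_{\varphi(i)}$; and evaluating $\ell_{\text{LH}}-\ell_{\text{UH}}$ case by case yields $0$ for all of these except the full triangle, where it equals $\varphi(i)>0$. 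Hence every $D_i(p)$ is of positive- or zero-type. Since $\lambda_0,\lambda_1,\lambda_2,\mu_0,\mu_1,\mu_2$ are additive under Minkowski sums, so is $\ell_{\text{LH}}-\ell_{\text{UH}}$, and therefore $D_f(p)$ is of positive- or zero-type for every $p$. Taking any agent $i_0$ and $p=w(i_0,\cdot)$ forces $D_{i_0}(p)=T_{\varphi(i_0)}$, so $D_f(p)$ contains a translate of a two-dimensional triangle and has $\ell_{\text{LH}}(D_f(p))-\ell_{\text{UH}}(D_f(p))\ge\varphi(i_0)>0$; this exhibits a two-dimensional maximizer of positive-type. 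Together with $f(0,0)=0$ (the empty matching) and the known $\MN$-concavity of assignment valuations, this settles the forward implication.

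\smallskip
\noindent\emph{Backward direction.} This is the substantial part, and I would argue by induction on $\Phi$. Note first that, $f$ being $\MN$-concave, every $D_f(p)$ is automatically $\MN$-convex, and that a maximizer of positive-type is necessarily two-dimensional (points and segments being of zero-type). If $\Phi=1$, then $T_1$ consists of three points, so $f$ coincides with the linear function $z\mapsto\langle(f(1,0),f(0,1)),z\rangle$ and is thus the valuation of a single agent with weight vector $(f(1,0),f(0,1))$ and supply $1$. If $\Phi\ge 2$, fix $p^*$ with $D_f(p^*)$ a positive-type maximizer, and let $g\colon T_1\to\mathbb{R}$, $g(z)=\langle p^*,z\rangle$, be the single-agent valuation with weight vector $p^*$ and supply $1$. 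The goal is to produce an $\MN$-concave function $f'\colon T_{\Phi-1}\to\mathbb{R}$ with $f'(0,0)=0$, all of whose two-dimensional maximizers are of positive- or zero-type with at least one of positive-type, and such that $f=f'\boxtimes g$ (M-convolution). Then the inductive hypothesis supplies a representation of $f'$, and adjoining one new agent of weight vector $p^*$ and supply $1$ represents $f=f'\boxtimes g$.

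\smallskip
\noindent\emph{The peeling step and the main obstacle.} The natural candidate is $f'(x)=\min_{z\in T_1}\bigl(f(x+z)-\langle p^*,z\rangle\bigr)$ on $T_{\Phi-1}$; dually, writing $f^\bullet(p)=\max_{x\in T_\Phi}\bigl(f(x)-\langle p,x\rangle\bigr)$ for the convex conjugate, this replaces $F:=f^\bullet$ by $F-c$, where $c(p)=\max(0,p^{*1}-p^1,p^{*2}-p^2)=g^\bullet(p)$ is a ``unit-cone'' function whose subdifferential at $p^*$ is $\operatorname{conv}(T_1)$ and which is affine elsewhere. One has to check that $F-c$ is again convex — equivalently, that $f'$ is $\MN$-concave and $f=f'\boxtimes g$ — which rests on the fact that $\operatorname{conv}(D_f(p^*))=\partial F(p^*)$ contains a translate of $T_1$ (true since $D_f(p^*)$ is two-dimensional), and that subtracting $c$ alters only the cell $D_f(p^*)$, replacing it by the Minkowski difference $D_f(p^*)\ominus T_1$ (whose $\ell_{\text{LH}}-\ell_{\text{UH}}$ drops by exactly $1$) and leaving every other cell unchanged up to translation. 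This is precisely where the positive-type hypothesis is indispensable: since $D_f(p^*)$ has $\ell_{\text{LH}}-\ell_{\text{UH}}\ge 1$, after peeling its type becomes positive or zero rather than negative; and since $\sum_{\text{cells }D}\bigl(\ell_{\text{LH}}(D)-\ell_{\text{UH}}(D)\bigr)=\ell_{\text{LH}}(T_\Phi)-\ell_{\text{UH}}(T_\Phi)=\Phi$ decreases to $\Phi-1\ge 1$, a positive-type cell still exists and the induction proceeds. The main obstacle I anticipate is making these three assertions rigorous — convexity of $F-c$, locality of the peeling to a single cell, and the conservation identity $\sum_{\text{cells}}(\ell_{\text{LH}}-\ell_{\text{UH}})=\Phi$ — i.e.\ establishing that the hexagonal cells of an $\MN$-concave function obeying the type conditions are exactly the Minkowski sums of triangles of the form $T_m$, which is the real content of the characterization.
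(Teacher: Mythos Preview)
The paper does not prove this proposition at all: it is quoted verbatim from Otsuka and Shioura~\cite{otsuka2023assignment} as Corollary~3.5 there, and the present paper merely cites it as background. There is therefore no ``paper's own proof'' to compare your attempt against.

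That said, your outline is a reasonable reconstruction of how such a result is typically established. The forward direction is essentially complete: the Minkowski-sum decomposition $D_f(p)=\sum_i D_i(p)$ is correct because the defining optimization separates over agents, and the additivity of $\ell_{\text{LH}}-\ell_{\text{UH}}$ under Minkowski sums follows immediately from the additivity of the six bounding parameters $\lambda_0,\lambda_1,\lambda_2,\mu_0,\mu_1,\mu_2$. Your case analysis of the single-agent maximizer sets is accurate.

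For the backward direction, the inductive peeling strategy you describe is the natural one and is, in spirit, what Otsuka and Shioura do (their Lemma~3.2, restated here as Proposition~\ref{prop:relation_w-p_and_varphi-edgeLength}, is exactly the bridge between positive-type cells and agents). You are right to flag the three technical points --- well-definedness and $\MN$-concavity of the deconvolution $f'$, locality of the peeling operation, and the conservation identity $\sum_D(\ell_{\text{LH}}(D)-\ell_{\text{UH}}(D))=\Phi$ --- as the real work. None of these is automatic, but all are provable with the tools already in the paper (Propositions~\ref{prop:sets_as_ineq} and~\ref{prop:MN_concave_function_ineq_characterization}); the conservation identity, for instance, follows from the same telescoping argument used in Lemma~\ref{lem:closure}. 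So your proposal is a correct plan with honestly acknowledged gaps, but there is nothing in the present paper to benchmark it against.
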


\section{Inverse problems}
\label{sec:inverse_with_supply}

Suppose that the values of a function $g\colon T_\Phi \to \mathbb{R}$ are coming from certain measurements, but the function turns out not to be an assignment valuation due to inaccuracies in the data. In such a scenario, one might be interested in finding an assignment valuation $f: T_\Phi\to\mathbb{R}$ that is as close to $g$ as possible. The difference between $f$ and $g$ can be measured in various ways, here we concentrate on the $\ell_1$- and $\ell_\infty$-norms. That is, our goal is to minimize $\|f-g\|_1=\sum_{x\in T_\Phi}|f(x)-g(x)|$ or $\|f-g\|_\infty=\max_{x\in T_\Phi}|f(x)-g(x)|$. For sake of simplicity, when it may cause no confusion, we use $\|\cdot\|$ to denote any of these norms.

Since we consider algorithmic problems related to bivariate assignment valuation functions, we have to specify how such a function is given as an input. Throughout the section, we assume that any function $g\colon T_\Phi\to\mathbb{R}$ appearing as the input of a problem is given by explicitly listing its value, that is, by the set $\{(x,g(x))\mid x\in T_\Phi\}$.

In Section~\ref{sec:irred_repr}, we introduce the notion of irreducible representations and overview their properties. In Section~\ref{sec:inverse_with_supply_and_func}, relying on the special structure of such representations, we show that finding an assignment valuation $f$ that minimizes $\|f-g\|$ is strongly $\NP$-hard even if only the weight function is missing. Finally, we explain how to solve the problem if $f$ is only required to be an $\MN$-concave function in Section~\ref{sec:mn}.

\subsection{Irreducible representations}
\label{sec:irred_repr}

Let $\Phi\in\mathbb{Z}_{>0}$ and let $f\colon T_\Phi \to \mathbb{R}$  be a bivariate assignment valuation represented by $(G,w,\varphi)$. If $w(u,1) = w(v,1)$ and $w(u,2) = w(v,2)$ holds for a pair of vertices $u,v \in V$, then let $G'=(V',N; V' \times N)$ denote the complete bipartite graph obtained from $G$ by deleting the vertex $v$, that is, let $V'\coloneqq V\setminus\{v\}$, and define $w'\colon V'\times N\to\mathbb{R}$ and $\varphi'\colon V'\to\mathbb{R}$ by setting $w'(i,j)=w(i,j)$  for $i \in V'$ and $j \in N$, and $\varphi'(i)=\varphi(i)$ if $i\neq u$ and $\varphi'(u)=\varphi(u)+\varphi(v)$ otherwise. For sake of simplicity, we call this operation the \emph{contraction} of vertices $u$ and $v$. It is not difficult to see that the assignment valuation represented by $(G',w',\varphi')$ is identical to the original one. If no contraction step can be applied, then 
\begin{equation}
    \forall i, j \in V,i\neq j:\ \{w(i,1),w(i,2)\}\neq \{w(j,1),w(j,2)\} . \tag{IR}\label{eq:distinct_weights}
\end{equation}
We call a representation $(G,w,\varphi)$ of $f$ \emph{irreducible} if it satisfies this property. Clearly, starting from any representation $(G,w,\varphi)$, one can obtain an irreducible representation $(G',w',\varphi')$ by the repeated application of the contraction operation. In particular, Otsuka and Shioura~\cite{otsuka2023assignment} showed the following.

\begin{prop}[{\cite[Lemma 3.2]{otsuka2023assignment}}]\label{prop:relation_w-p_and_varphi-edgeLength}
    Let $(G,w,\varphi)$ be an irreducible representation of a bivariate assignment valuation $f$. Let $p\in \mathbb{R}^2$ be a vector such that the maximizer set $D_f(p)$ is two-dimensional.
    \begin{enumerate}
        \item If $p\neq ( w(i,1), w(i,2) )$ for all $i \in V$, then $D_f(p)$ is of zero-type.
        \item If $p= ( w(i_p,1), w(i_p,2) )$ for some  $i_p \in V$, then $D_f(p)$ is of positive-type. Furthermore, $i_p$ is the unique vertex with $p= ( w(i_p,1), w(i_p,2) )$, and it satisfies $\ell_{\text{LH}}(D_f(p)) - \ell_{\text{UH}}(D_f(p)) =  \varphi(i_p)$.
    \end{enumerate}
\end{prop}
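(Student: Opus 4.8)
The plan is to analyze the maximizer set $D_f(p)$ directly in terms of the underlying $b$-matching problem defining $f$, using LP/combinatorial duality. Fix a slope vector $p \in \mathbb{R}^2$ with $D_f(p)$ two-dimensional. A point $x \in T_\Phi$ lies in $D_f(p)$ exactly when some optimal $b$-matching $y = (y^1_i, y^2_i)_{i\in V}$ for $f(x)$ is also optimal for the ``Lagrangian'' problem $\max_y \sum_i (w(i,1)-p^1)y^1_i + (w(i,2)-p^2)y^2_i$ subject only to the degree constraints $y^1_i + y^2_i \le \varphi(i)$. The key point is that this relaxed problem decouples over the vertices $i \in V$: for each agent $i$ the optimal contribution is $\varphi(i)\cdot\max\{0,\, w(i,1)-p^1,\, w(i,2)-p^2\}$, attained by putting all $\varphi(i)$ units on whichever of ``good $1$'', ``good $2$'', or ``nothing'' is best. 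I would first make this correspondence precise: $x\in D_f(p)$ iff $x$ is achievable as $(\sum_i y^1_i, \sum_i y^2_i)$ by some vertex-wise optimal choice. Then the set of achievable $x$ is a Minkowski sum over $i\in V$ of segments/points determined by which of the three quantities above are simultaneously maximal.

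Next I would classify vertices according to the sign pattern of $(w(i,1)-p^1, w(i,2)-p^2)$ relative to $0$. Generically each agent has a strict best option and contributes a single point; such agents only translate $D_f(p)$ and do not affect its shape. An agent $i$ contributes a nontrivial segment precisely when two (or three) of $\{0, w(i,1)-p^1, w(i,2)-p^2\}$ tie for the maximum. The three tie cases give segments in direction $\chi_1$ (when $w(i,2)-p^2 = 0 \ge w(i,1)-p^1$, say — actually when $0 = w(i,1)-p^1 \ge w(i,2)-p^2$: moving a unit between good $1$ and nothing), direction $\chi_2$ (symmetric), and direction $\chi_1 - \chi_2$ (when $w(i,1)-p^1 = w(i,2)-p^2 \ge 0$: moving a unit between good $1$ and good $2$). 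Summing up, $D_f(p)$ is a Minkowski sum of segments of lengths $\varphi(i)$ in these three directions plus a translation; its hexagon shape is governed by which directions occur. Crucially, a segment in direction $\chi_1-\chi_2$ contributes $\varphi(i)$ to the URD edge, while segments in directions $\chi_1$ and $\chi_2$ contribute to the horizontal/vertical edges — so the quantity $\ell_{\text{LH}} - \ell_{\text{UH}}$ (which by the Otsuka–Shioura identity equals $\frac{1}{\sqrt2}(\ell_{\text{URD}}-\ell_{\text{LLD}})$, and $\ell_{\text{LLD}}=0$ here since there is never a segment in direction $\chi_2-\chi_1$) is exactly $\sum \{\varphi(i) : w(i,1)-p^1 = w(i,2)-p^2 \ge 0,\ \text{with the inequality...}\}$.

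Now I split into the two cases of the statement. If $p \ne (w(i,1),w(i,2))$ for every $i$, then no agent can have $w(i,1)-p^1 = w(i,2)-p^2$ with this common value $\ge 0$ \emph{and} simultaneously be a genuine diagonal case — more carefully, the only way to get a direction-$(\chi_1-\chi_2)$ segment is $w(i,1)-p^1 = w(i,2)-p^2$, and for this to be the vertex's maximum we'd need this value $\ge 0$; if it is $>0$ then the point $(w(i,1),w(i,2))$ would coincide with... no: the point is that $w(i,1)-p^1 = w(i,2)-p^2 > 0$ does not force $p = (w(i,1),w(i,2))$. I need to handle this honestly: such an agent contributes a diagonal segment, making $D_f(p)$ positive-type, which contradicts $p$ not being a weight vector — so I must show this cannot happen, presumably because if $w(i,1)-p^1 = w(i,2)-p^2 = c > 0$ for \emph{some} agent, then translating $p$ we could still... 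Actually the cleaner route: show that for $D_f(p)$ to be two-dimensional at all when $p$ is not a weight vector, all agents must be in ``strict'' position except possibly those tied at value $0$, forcing zero-type. I expect \textbf{this case analysis — pinning down exactly when diagonal segments appear and ruling them out for non-weight slopes — to be the main obstacle}, and it is where irreducibility \eqref{eq:distinct_weights} enters: it guarantees distinct agents give distinct points $(w(i,1),w(i,2))$, so at a slope $p=(w(i_p,1),w(i_p,2))$ only the single agent $i_p$ has its ``nothing'' option tied with a positive good value in the right way, yielding a direction-$(\chi_1-\chi_2)$ segment of length exactly $\varphi(i_p)$; hence $\ell_{\text{LH}}(D_f(p)) - \ell_{\text{UH}}(D_f(p)) = \varphi(i_p) > 0$, which is positive-type, and uniqueness of $i_p$ is immediate from \eqref{eq:distinct_weights}. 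In the write-up I would either invoke \cite[Lemma 3.2]{otsuka2023assignment} itself since it is stated in the excerpt, or reconstruct the argument via this Minkowski-sum picture; the latter is more self-contained and makes the edge-length bookkeeping transparent.
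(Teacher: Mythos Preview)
The paper does not actually prove this proposition --- it is quoted verbatim from \cite{otsuka2023assignment} and used as a black box --- so there is no proof in the paper to compare against. Your Minkowski-sum approach is in fact a clean and standard way to establish the result, but your proposal contains a real gap precisely at the point where you flag trouble.

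The error is in the edge-length bookkeeping. You write that a segment in direction $\chi_1-\chi_2$ ``contributes $\varphi(i)$ to the URD edge'' and that ``$\ell_{\text{LLD}}=0$ here since there is never a segment in direction $\chi_2-\chi_1$''. But URD and LLD are \emph{parallel} edges of the hexagon, and any line segment --- being centrally symmetric --- contributes its length equally to both of them. Concretely, the diagonal segment from $(\varphi(i),0)$ to $(0,\varphi(i))$ has $\ell_{\text{URD}}=\ell_{\text{LLD}}=\sqrt{2}\,\varphi(i)$, hence is of zero-type. The same holds for horizontal and vertical segments: every segment summand is zero-type and contributes nothing to $\ell_{\text{LH}}-\ell_{\text{UH}}$. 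This is exactly why your case analysis stalls: diagonal segments from agents with $w(i,1)-p^1=w(i,2)-p^2>0$ do appear, but they are harmless.

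The missing observation is that the only summand which is \emph{not} zero-type is the full triangle $\{(y^1,y^2)\in\mathbb{Z}^2_{\ge 0}: y^1+y^2\le\varphi(i)\}$, and this arises precisely when all three of $0,\,w(i,1)-p^1,\,w(i,2)-p^2$ are tied --- i.e.\ exactly when $p=(w(i,1),w(i,2))$. That triangle has $\ell_{\text{LH}}-\ell_{\text{UH}}=\varphi(i)$. Since the quantity $\ell_{\text{LH}}-\ell_{\text{UH}}$ is additive under Minkowski sums (edge lengths in each direction add), you get
\[
\ell_{\text{LH}}(D_f(p))-\ell_{\text{UH}}(D_f(p))=\sum\bigl\{\varphi(i):p=(w(i,1),w(i,2))\bigr\}.
\]
Irreducibility \eqref{eq:distinct_weights} now finishes both parts immediately: the sum is empty (zero-type) if $p$ is not a weight vector, and consists of the single term $\varphi(i_p)$ (positive-type) if $p=(w(i_p,1),w(i_p,2))$. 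Once you replace your segment/URD claim with this triangle observation, the argument you outlined is complete.
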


Proposition~\ref{prop:relation_w-p_and_varphi-edgeLength} has a simple but interesting corollary.

\begin{cor} \label{cor:unique_irred_sol}
    Let $\Phi\in\mathbb{Z}_{> 0}$ and let $f\colon T_\Phi \to \mathbb{R}$ be a bivariate assignment valuation. Then, $f$ has a unique irreducible representation.
\end{cor}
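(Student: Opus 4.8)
The plan is to show that all the data of an irreducible representation of $f$ --- the set of agents (up to relabeling them), the weight function, and the supply function --- is already determined by $f$ itself, so that two irreducible representations must coincide. Existence of an irreducible representation is already clear, since each contraction strictly decreases the number of agents. So fix an irreducible representation $(G,w,\varphi)$ of $f$ with agent set $V$, write $p_i\coloneqq(w(i,1),w(i,2))$ for $i\in V$, and let $\cP$ denote the set of slope vectors of two-dimensional maximizer sets of $f$ that are of positive-type; this set depends only on $f$. By Proposition~\ref{prop:relation_w-p_and_varphi-edgeLength}(1), every element of $\cP$ equals $p_j$ for some $j\in V$, so $\cP\subseteq\{p_i\mid i\in V\}$. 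The crux is the reverse inclusion: that $D_f(p_i)$ is two-dimensional --- hence of positive-type by Proposition~\ref{prop:relation_w-p_and_varphi-edgeLength}(2) --- for every $i\in V$.

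To prove this I would unfold the maximizer set in terms of $b$-matchings. For a slope $p\in\mathbb{R}^2$, the value $\max_{x\in T_\Phi}\bigl(f(x)-\langle p,x\rangle\bigr)$ equals the maximum of $\sum_{e\in F}\bigl(w(e)-p^{\,j(e)}\bigr)$ over edge multisets $F\subseteq V\times N$ with $\deg_F(v)\le\varphi(v)$ for all $v\in V$, where $j(e)\in N$ denotes the endpoint of $e$ in $N$, and $x\in D_f(p)$ exactly when some such optimal $F$ has $N$-degree vector $x$. Now take $p=p_i$: every edge incident to $i$ has reduced weight $w(i,1)-p_i^1=0$ or $w(i,2)-p_i^2=0$, so from any optimal $F^*$ we may delete all edges at $i$ without changing the objective; writing $x_0$ for the resulting $N$-degree vector, we get $x_0\in D_f(p_i)$. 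Moreover, for every $t\in\mathbb{Z}^2_{\ge0}$ with $t^1+t^2\le\varphi(i)$, re-adding $t^1$ copies of $(i,1)$ and $t^2$ copies of $(i,2)$ again leaves the objective unchanged, so $x_0+t\in D_f(p_i)$. Hence $D_f(p_i)\supseteq x_0+T_{\varphi(i)}$, which is two-dimensional because $\varphi(i)\ge1$.

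With this established, $\cP=\{p_i\mid i\in V\}$, and by the uniqueness statement in Proposition~\ref{prop:relation_w-p_and_varphi-edgeLength}(2) the map $i\mapsto p_i$ is injective, hence a bijection $V\to\cP$; moreover that same proposition gives $\varphi(i)=\ell_{\text{LH}}(D_f(p_i))-\ell_{\text{UH}}(D_f(p_i))$. Since $\cP$, and for each $p\in\cP$ the set $D_f(p)$, depend only on $f$, so do the agent set up to relabeling, the weight vectors, and the supplies; therefore any two irreducible representations of $f$ coincide up to relabeling agents, which is the assertion. The step I expect to be the real obstacle is the two-dimensionality of $D_f(p_i)$ for every agent $i$: without it an irreducible representation could contain an agent whose weight vector is not a positive-type slope of $f$, breaking the bijection with $\cP$ and hence uniqueness --- and it is precisely the strict positivity $\varphi(i)>0$ that rules this out in the $b$-matching argument above.
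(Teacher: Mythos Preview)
Your argument is correct and follows the same route as the paper, namely recovering $(w,\varphi)$ from the positive-type two-dimensional maximizer sets of $f$ via Proposition~\ref{prop:relation_w-p_and_varphi-edgeLength}. The paper's one-sentence proof leaves implicit the point you single out as the crux---that $D_f(p_i)$ is two-dimensional for every agent $i$, which is needed to get a bijection $V\to\cP$ rather than merely an injection $\cP\to V$---while your $b$-matching argument supplies it directly.
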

\begin{proof}
    By Proposition~\ref{prop:relation_w-p_and_varphi-edgeLength}, in any irreducible representation $(G,w,\varphi)$ of $f$, the weights and the supplies are uniquely determined (modulo reordering the vertices of $G$) by the family of two-dimensional maximizer sets of $f$ having positive type.
\end{proof}

In~\cite{otsuka2023assignment}, Otsuka and Shioura developed a simple algorithm that determines the family of two-dimensional maximizers of an $\MN$-concave function together with their corresponding slope vectors. This in turn leads to an algorithm that returns an irreducible representation of a bivariate assignment valuation function in $O(\Phi)$ time.

\subsection{Hardness results}
\label{sec:inverse_with_supply_and_func}

First, we consider a natural decision problem that aims at determining the weights in a representation of a bivariate assignment valuation function. Somewhat surprisingly, the problem turns out to be $\NP$-complete.

\decprob{Determine-Weight}{A bivariate assignment valuation $f\colon T_\Phi \to \mathbb{R}$, a graph $G=(V,N;V\times N)$, and a supply function $\varphi\colon V \to \mathbb{R}_{> 0}$ with $\sum_{i\in V}\varphi(i)=\Phi$.}{Decide if there exists a weight function $w\colon V\times N \to \mathbb{R}$ such that $(G,w,\varphi)$ is a representation of $f$.}

It is worth emphasizing the subtle difference between \textsc{Determine-Weight} and the problem solved by the algorithm of Otsuka and Shioura. In the latter, the graph $G$ and the supply function $\varphi$ are not fixed, and the algorithm determines an irreducible representation of the function. As we will see, the hardness of \textsc{Determine-Weight} stems from the fact that an assignment valuation usually have numerous -- non-irreducible -- representations.

\begin{thm}\label{thm:get_w_is_hard}
    \textsc{Determine-Weight} is strongly $\NP$-complete.
\end{thm}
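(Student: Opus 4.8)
The plan is to reduce from \textsc{$3$-Partition}. Given an instance $A=\{a_1,\dots,a_{3k}\}$ with target $T$ and the promise $T/4<a_i<T/2$, I want to build a bivariate assignment valuation $f\colon T_\Phi\to\mathbb{R}$ together with a graph $G=(V,N;V\times N)$ and a supply function $\varphi$ such that a weight function $w$ with $(G,w,\varphi)$ representing $f$ exists if and only if $A$ admits a partition into triplets of sum $T$. The key idea, guided by Corollary~\ref{cor:unique_irred_sol} and Proposition~\ref{prop:relation_w-p_and_varphi-edgeLength}, is that $f$ has a \emph{unique} irreducible representation $(G^\star,w^\star,\varphi^\star)$, which is efficiently computable; any other representation $(G,w,\varphi)$ of $f$ is obtained from the irreducible one by \emph{splitting} vertices, i.e. the reverse of contraction. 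So I should design $f$ so that its irreducible representation has $k$ vertices $u_1,\dots,u_k$ of positive type with $\varphi^\star(u_\ell)=T$ for each $\ell$ (plus possibly some auxiliary vertices with controlled supply summing to the rest of $\Phi$), and then set $G$ to have $3k$ vertices (plus the matching auxiliary vertices) with prescribed supplies $\varphi(v_i)=a_i$. Then $(G,w,\varphi)$ represents $f$ for some $w$ precisely when the multiset $\{a_1,\dots,a_{3k}\}$ can be grouped so that each group, all assigned the same weight pair, has total supply exactly $T$ — and because $T/4<a_i<T/2$ forces every group to have exactly three elements, this is exactly a valid $3$-partition.

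The concrete construction should be explicit and small enough that the reduction is polynomial in the unary encoding of $A$. I would take $f$ to be the assignment valuation of a graph with $k$ agents of supply $T$, all given pairwise distinct weight pairs $(w(u_\ell,1),w(u_\ell,2))$ chosen in ``general position'' so that the $k$ slope vectors are distinct and the two-dimensional maximizer sets $D_f(p_\ell)$ are genuine positive-type hexagons with $\ell_{\text{LH}}-\ell_{\text{UH}}=T$, while all other two-dimensional maximizer sets are zero-type; Proposition~\ref{prop:characterization_assig_val} guarantees this is indeed an assignment valuation, and Proposition~\ref{prop:relation_w-p_and_varphi-edgeLength} guarantees this representation is already irreducible (distinct weight pairs). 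I then output $f$ by listing all $O(\Phi^2)$ values (computable by $b$-matching), the graph $G$ on $3k$ vertices, and $\varphi(v_i)=a_i$. For the forward direction, a $3$-partition $A_1,\dots,A_k$ yields the weight function $w(v_i,\cdot)\coloneqq w^\star(u_\ell,\cdot)$ whenever $a_i\in A_\ell$; contracting each group recovers the irreducible representation, hence $(G,w,\varphi)$ represents $f$. For the reverse direction, given any $w$ with $(G,w,\varphi)$ representing $f$, repeatedly contract equal-weight vertices to reach the unique irreducible representation; the contraction classes partition $V$ into groups whose supplies sum to the supplies $\varphi^\star$ of the irreducible vertices, i.e. each group sums to $T$, and the size-three constraint follows from $T/4<a_i<T/2$.

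Two points need care. First, I must verify that \emph{every} representation of $f$ really is a ``vertex-split'' of the irreducible one, i.e. that contraction is the only source of non-uniqueness — this should follow cleanly from Proposition~\ref{prop:relation_w-p_and_varphi-edgeLength}: in any representation, the vertices with weight pair equal to a slope vector of a positive-type maximizer are forced, their supplies must sum to $\ell_{\text{LH}}-\ell_{\text{UH}}$ of that maximizer, and vertices whose weight pair is not such a slope vector contribute to zero-type maximizers and can be shown to be ``redundant'' (contractible or of supply that can be absorbed). Handling these auxiliary/redundant vertices correctly — ensuring they cannot be used to cheat the supply accounting and that $\varphi$ on the extra vertices is chosen so the only flexibility is in partitioning the $a_i$'s — is the main obstacle, and I would address it by making the auxiliary part of $\varphi$ forced (e.g. a single extra vertex, or none at all if $\Phi=kT$ exactly, which it is since $\sum a_i = kT$). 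Second, membership in $\NP$ is immediate: a certificate is the weight function $w$ (of polynomial size), and checking that $(G,w,\varphi)$ represents $f$ amounts to computing the $b$-matching value \eqref{eq:assignment_valuation_def} at each of the $O(\Phi^2)$ points and comparing with $f$, which is polynomial. Combining, \textsc{Determine-Weight} is strongly $\NP$-complete.
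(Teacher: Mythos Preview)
Your proposal is correct and follows essentially the same approach as the paper's proof: reduce from \textsc{$3$-Partition} by defining $f$ via an irreducible representation with $k$ agents of supply $T$ each (the paper uses the concrete choice $w'(i,j)=i$), and take $G$ with $3k$ vertices of supplies $a_1,\dots,a_{3k}$; the backward direction is exactly the contraction-to-irreducible argument you outline, justified by Corollary~\ref{cor:unique_irred_sol}. Your worry about auxiliary vertices is already resolved by your own observation that $\Phi=\sum a_i=kT$, so none are needed, and your concern that ``every representation is a vertex-split of the irreducible one'' is precisely what the uniqueness of the irreducible representation gives: any $(G,w,\varphi)$ contracts to \emph{some} irreducible representation, which must coincide with the designed one, forcing each weight pair to equal one of the $k$ slope vectors and the supplies within each weight-class to sum to $T$.
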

\begin{proof}
Given a weight function $w\colon V\times N\to\mathbb{R}$, one can compute the value of the function represented by $(G,w,\varphi)$ and compare it to $f(x)$ for every $x\in T_\Phi$, hence the problem is clearly in $\NP$.

We prove hardness by reduction from \textsc{3-partition}. Let $A=\{a_1,\dots,a_{3k}\}$ be an instance of \textsc{$3$-partition} such that the numbers are represented in unary and $T/4 < a_i < T/2$ for $i\in[3k]$, where $T=(\sum_{i=1}^{3k}a_i)/k$. We create an instance $(f,G,\varphi)$ of \textsc{Determine-Weight} as follows. With the choice $\Phi=\sum_{i=1}^{3k} a_i$, let $f\colon T_\Phi\to\mathbb{R}$ denote the bivariate assignment valuation represented by $(G',w',\varphi')$, where $G'=(V',N;V'\times N)$ with $V'=[k]$, $w'(i,j)=i$ for $(i,j)\in V'\times N$, and $\varphi'(i)=T$ for $i\in[k]$. Note that $|T_\Phi| \leq (\sum_{i=1}^{3k} a_i)^2$, hence the value of $f(x)$ can be computed for all $x\in T_\Phi$ in polynomial time as the input of \textsc{3-partition} is represented in unary. Finally, let $G=(V,N;V\times N)$ with $V=[3k]$ and set $\varphi(i) = a_i$ for $i\in[3k]$; see Figure~\ref{fig:reduction} for an example. 

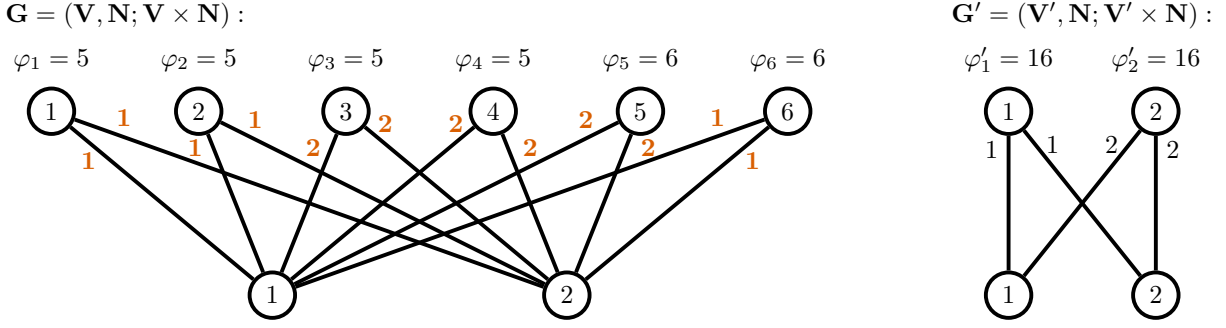
\begin{figure}
    \centering
    \resizebox{\textwidth}{!}{
    \begin{tikzpicture}[
    nodeSty/.style={circle, draw, line width = 1.5pt},
    edgeSty/.style={line width = 1.5pt}]
        \node (G) at (3,3.3) {$\mathbf{G=(V,N; V\times N)}:$};
        \node[nodeSty] (v1) at (2,2) {1};
        \node[nodeSty] (v2) at (4,2) {2};
        \node[nodeSty] (v3) at (6,2) {3};
        \node[nodeSty] (v4) at (8,2) {4};
        \node[nodeSty] (v5) at (10,2) {5};
        \node[nodeSty] (v6) at (12,2) {6};
        \node[nodeSty] (n1) at (5,-0.5) {1};
        \node[nodeSty] (n2) at (9,-0.5) {2};    
        \node (varphi1) at (2,2.7) {$\varphi_1 = 5$};
        \node (varphi2) at (4,2.7) {$\varphi_2 = 5$};
        \node (varphi3) at (6,2.7) {$\varphi_3 = 5$};
        \node (varphi4) at (8,2.7) {$\varphi_4 = 5$};
        \node (varphi5) at (10,2.7) {$\varphi_5 = 6$};
        \node (varphi6) at (12,2.7) {$\varphi_6 = 6$};
    
        \foreach \x \w \sideOne \sideTwo in {1/1/below/above, 2/1/left/above, 3/2/left/above,4/2/above/right, 5/2/above/right, 6/1/above/below}{
            \draw (v\x) edge[edgeSty]  node[\sideOne, pos=0.1, MyOrange] {$\mathbf{\w }$} (n1);
            \draw (v\x) edge[edgeSty]  node[\sideTwo, pos=0.1, MyOrange] {$\mathbf{\, \w}$} (n2);
        }
        
        \node (Gp) at (3+13,3.3) {$\mathbf{G'=(V',N; V'\times N):}$};
        \node[nodeSty] (vp1) at (2+13,2) {1};
        \node[nodeSty] (vp2) at (4+13,2) {2};
        \node[nodeSty] (np1) at (2+13,-0.5) {1};
        \node[nodeSty] (np2) at (4+13,-0.5) {2};    
        \node (varphip1) at (2+13,2.7) {$\varphi_1' = 16$};
        \node (varphip2) at (4+13,2.7) {$\varphi_2' = 16$};
    
        \foreach \x in {1, 2}{
            \draw (vp\x) edge[edgeSty]  node[left, pos=0.1] {$\x$} (np1);
            \draw (vp\x) edge[edgeSty]  node[right, pos=0.1] {$\x$} (np2);
        }
            
    \end{tikzpicture}
    }
    \caption{Illustration of Theorem~\ref{thm:get_w_is_hard}. The \textsc{3-partition} instance is $A=\{5,5,5,5,6,6\}$ where $T=16$ and $k=2$. The corresponding instance $(f,G,\varphi)$ of \textsc{Determine-Weight} is shown on the left, where the values of $f$ are defined by the representation $(G',w',\varphi')$ shown on the right. A solution of \textsc{3-partition} is $A_1= \{ 5,5,6 \}$ and $A_2= \{ 5,5,6 \}$, while a solution $w$ of \textsc{Determine-Weight} is shown by the thick orange numbers on the edges on the left.}
    \label{fig:reduction}
\end{figure}

We claim that $A$ is a \texttt{Yes} instance of \textsc{3-partition} if and only if $(f,G,\varphi)$ is a \texttt{Yes} instance of \textsc{Determine-Weight}. To see the forward direction, let  $A=A_1\cup\dots\cup A_k$ be a partition of $A$ into disjoint triplets such that $\sum_{a\in A_i}a=T$ for $i\in[k]$. For each $i\in[k]$ and $a\in A_i$, define $w(a,j) = w'(i,j)$ for $j=1,2$. Then, $(G',w',\varphi')$ is obtained from $(G,w,\varphi)$ by contractions, hence they represent the same assignment valuation, namely $f$.

For the backward direction, let $w\colon V\times N\to\mathbb{R}$ be a weight function such that $(G,w,\varphi)$ represents $f$. Note that $(G', w',\varphi')$ is the unique irreducible representation of $f$ due to the choice of $w'$. This implies that $(G',w',\varphi')$ can be obtained from $(G,w,\varphi)$ by contractions. Since $|V|=3k$, $|V'|=k$, $T/4 < a_i < T/2$ and $\varphi'(i)=T$ for $i\in[k]$, each vertex in $V'$ is obtained by contracting three vertices in $V$ whose $\varphi$ values sum up to $T$, thus leading to a solution of the \textsc{3-partition} problem. This finishes the proof of $\NP$-completeness.

Observe that the maximum value of $f$ can be bounded by $\sum_{i=1}^{k} [\varphi'(i)(w'(i,1)+w'(i,2))]= 2T\sum_{i=1}^k i= (k+1)\sum_{i=1}^{3k} a_i$. That is, the values of $f$ and $\varphi$ are polynomial in the size of $T_\Phi$, hence \textsc{Determine-Weight} remains $\NP$-complete even if the input is represented in unary, concluding the proof of the theorem.
\end{proof}

Let us now define the optimization version of \textsc{Determine-Weight}.

\searchprob{Determine-Opt-Weight}{A function $g\colon T_\Phi \to \mathbb{R}$, a graph $G=(V,N;V\times N)$, and a supply function $\varphi\colon V \to \mathbb{R}_{> 0}$ with $\sum_{i\in V}\varphi(i)=\Phi$.}{Find a weight function $w\colon V\times N\to\mathbb{R}$ such that $\|f-g\|$ is minimized, where $f$ is the bivariate assignment valuation represented by $(G,w,\varphi)$.}

Theorem~\ref{thm:get_w_is_hard} immediately implies the following result.

\begin{cor}\label{thm:invSupply_is_N-Phard}
    \textsc{Determine-Opt-Weight} is strongly $\NP$-hard.
\end{cor}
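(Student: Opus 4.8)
The plan is to derive \textsc{Determine-Opt-Weight} directly from Theorem~\ref{thm:get_w_is_hard} via the standard observation that a minimization problem whose optimum is exactly $0$ precisely on the \texttt{Yes}-instances of an $\NP$-hard decision problem is itself $\NP$-hard. Concretely, take the very same polynomial-time reduction from \textsc{$3$-Partition} used in the proof of Theorem~\ref{thm:get_w_is_hard}: given an instance $A=\{a_1,\dots,a_{3k}\}$ with $T/4<a_i<T/2$, build $(f,G,\varphi)$ exactly as there, set $\Phi=\sum_i a_i$, and feed $(g,G,\varphi)$ with $g\coloneqq f$ into \textsc{Determine-Opt-Weight}. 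Since $f$ is itself a bivariate assignment valuation, this is a legitimate instance of the optimization problem.

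First I would observe that the optimum value of this \textsc{Determine-Opt-Weight} instance is $0$ if and only if there is a weight function $w$ with the assignment valuation represented by $(G,w,\varphi)$ equal to $g=f$, i.e.\ if and only if $(f,G,\varphi)$ is a \texttt{Yes}-instance of \textsc{Determine-Weight}; and any $w$ with $\|f-g\|=0$ (in either norm) means $f$ and $g$ agree on all of $T_\Phi$, hence represents $f$. Combining this with the equivalence established in the proof of Theorem~\ref{thm:get_w_is_hard} --- that $(f,G,\varphi)$ is a \texttt{Yes}-instance iff $A$ is a \texttt{Yes}-instance of \textsc{$3$-Partition} --- yields: the optimum of the constructed \textsc{Determine-Opt-Weight} instance equals $0$ iff $A$ admits the desired triplet partition. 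Thus an oracle (or polynomial-time algorithm) for \textsc{Determine-Opt-Weight} decides \textsc{$3$-Partition}.

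For the \emph{strong} $\NP$-hardness I would note, as in the theorem, that all numbers involved --- the supplies $a_i$ and $\varphi'(i)=T$, and the values of $f$, which are bounded by $(k+1)\sum_i a_i$ --- are polynomial in the size of the unary-encoded \textsc{$3$-Partition} input; hence the reduction is a pseudopolynomial (indeed polynomial-in-unary) reduction, and since \textsc{$3$-Partition} is strongly $\NP$-complete, \textsc{Determine-Opt-Weight} is strongly $\NP$-hard. I would also remark that this holds regardless of whether $\|\cdot\|$ is the $\ell_1$- or the $\ell_\infty$-norm, since the argument only uses that $\|f-g\|=0 \iff f\equiv g$ on $T_\Phi$.

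There is essentially no obstacle here: the only point requiring a sentence of care is that \textsc{Determine-Opt-Weight} takes an arbitrary $g$ as input rather than a valuation, so one must explicitly note that plugging in $g=f$ (a valuation) is allowed and that the infimum is attained (it is, at $w=w'$, whenever the answer is \texttt{Yes}; when the answer is \texttt{No} we only need that the optimum is bounded away from $0$, which follows because there are finitely many combinatorial types of contraction patterns, or more simply because the optimum is a positive number whenever no exact representation exists). Given how short this is, I would present it as a brief proof of Corollary~\ref{thm:invSupply_is_N-Phard} immediately after the statement.
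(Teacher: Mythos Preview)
Your proposal is correct and follows essentially the same approach as the paper: feed the \textsc{Determine-Weight} instance $(f,G,\varphi)$ into \textsc{Determine-Opt-Weight} with $g\coloneqq f$ and observe that the optimum is $0$ if and only if the original instance is a \texttt{Yes}-instance. The paper's version is more terse (it simply cites Theorem~\ref{thm:get_w_is_hard} rather than re-tracing the \textsc{$3$-Partition} reduction), and your final paragraph about the infimum being attained or bounded away from $0$ is unnecessary---for hardness it suffices to consider the decision version ``is there $w$ with $\|f_w-g\|\le 0$?'', which is literally \textsc{Determine-Weight}.
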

\begin{proof}
Let $(f,G,\varphi)$ be an instance of \textsc{Determine-Weight}. The same  instance can be considered as an input of \textsc{Determine-Opt-Weight} as well, with $f$ playing the role of $g$. Observe that the optimal value of the latter problem is $0$ if and only if the former problem admits a solution, hence the statement follows by Theorem~\ref{thm:get_w_is_hard}.
\end{proof}

\subsection{The \texorpdfstring{$\MN$}{Mn}-concave case}
\label{sec:mn}

Since every assignment valuation function is $\MN$-concave, the following problem provides a natural relaxation of \textsc{Determine-Opt-Weight}.

\searchprob{Inverse-$\MN$-concave-Function}{A function $g\colon T_\Phi \to \mathbb{R}$ for some $\Phi\in\mathbb{Z}_{>0}$.}{Find an $\MN$-concave function $f\colon T_\Phi\to\mathbb{R}$ such that $\|f-g\|$ is minimized.}

\newpage
\begin{thm}\label{thm:LP_MN_function}
    \textsc{Inverse-$\MN$-concave-Function} can be solved in polynomial time when $\|\cdot\|$ is the $\ell_1$- or $\ell_\infty$-norm.
\end{thm}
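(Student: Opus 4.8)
The plan is to formulate \textsc{Inverse-$\MN$-concave-Function} directly as a linear program using the local characterization of $\MN$-concavity from Proposition~\ref{prop:MN_concave_function_ineq_characterization}. Introduce one real variable $f(x)$ for each $x\in T_\Phi$; since $g$ is given explicitly by listing its values, the number of variables is $|T_\Phi| = \binom{\Phi+2}{2} = O(\Phi^2)$, which is polynomial in the input size. The $\MN$-concavity of $f$ is equivalent to the three families of inequalities in Proposition~\ref{prop:MN_concave_function_ineq_characterization}, one for each triple $(k,h)$ with $k+h\le\Phi-2$; these are linear in the variables $f(x)$, and there are $O(\Phi^2)$ of them. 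So the feasible region $\{f\colon T_\Phi\to\mathbb R \mid f \text{ is }\MN\text{-concave}\}$ is a polyhedron described by polynomially many linear constraints, and it only remains to linearize the objective.

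For the $\ell_\infty$-norm, I would add a single scalar variable $t$ together with the constraints $-t \le f(x) - g(x) \le t$ for every $x\in T_\Phi$, and minimize $t$; an optimal solution of this LP gives an $\MN$-concave $f$ minimizing $\|f-g\|_\infty$. For the $\ell_1$-norm, I would introduce auxiliary variables $t_x$ for $x\in T_\Phi$ with constraints $-t_x \le f(x)-g(x) \le t_x$, and minimize $\sum_{x\in T_\Phi} t_x$; since each $t_x$ is pushed down to $|f(x)-g(x)|$ at optimum, this computes $\min_f \|f-g\|_1$. In both cases the LP has $O(\Phi^2)$ variables and $O(\Phi^2)$ constraints with coefficients in $\{-1,0,1\}$ (plus the constants $g(x)$, which are part of the input), so it can be solved in time polynomial in the input size by any polynomial-time LP algorithm. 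This yields the claimed running time.

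There is essentially no deep obstacle here — the content is entirely in recognizing that Proposition~\ref{prop:MN_concave_function_ineq_characterization} turns the $\MN$-concavity requirement into polynomially many linear inequalities, so that the whole problem is a plain linear program. The only points that need a sentence of care are: (i) confirming that the feasible region is nonempty (it is, e.g. any affine function lies in it) so the LP is not vacuous, and (ii) noting the standard fact that minimizing $t$ (resp. $\sum_x t_x$) subject to the absolute-value constraints indeed returns the $\ell_\infty$- (resp. $\ell_1$-) distance, since at an optimum the slack variables are tight. If one wants an output that is not merely rational but, say, matches additional integrality or structural properties, that would require more work, but for the statement as given — finding any $\MN$-concave real-valued $f$ minimizing the norm — the LP formulation suffices and runs in polynomial time.
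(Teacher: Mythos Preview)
Your proposal is correct and follows essentially the same approach as the paper: write the $\MN$-concavity constraints as the polynomially many linear inequalities from Proposition~\ref{prop:MN_concave_function_ineq_characterization}, then linearize the $\ell_1$- or $\ell_\infty$-objective via the standard auxiliary-variable trick. Your write-up is in fact slightly more explicit about the linearization and about feasibility than the paper's own proof.
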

\begin{proof}
By Proposition~\ref{prop:MN_concave_function_ineq_characterization}, the problem can be formulated as a linear program with variables $f(k,h)\in\mathbb{R}$ for $(k,h)\in T_\Phi$:
\begin{align*}
    \min\  &\| f-g\| &&\\
    \text{s.t.}~& f(k,h) + f(k+1,h+1) \leq f(k+1,h) + f(k,h+1) &&\forall (k,h) \in T_{\Phi-2},\\
    &f(k,h+1) + f(k+2,h) \leq f(k+1,h+1) + f(k+1,h) &&\forall (k,h) \in T_{\Phi-2},\\
    &f(k+1,h) + f(k,h+2) \leq f(k+1,h+1) + f(k,h+1) &&\forall (k,h) \in T_{\Phi-2}.
\end{align*}    
Note that the number of variables is $|T_\Phi|$ while the number of constraints is $3|T_{\Phi-2}|$. Strictly speaking, this is not an linear program yet as the objective function is not linear. However, a standard and folklore argument shows that the problem can be written as a linear program for both the $\ell_1$- and $\ell_\infty$-norm objectives while increasing the numbers of variables and constraints by at most $|T_\Phi|$ and $2|T_\Phi|$, respectively. This concludes the proof of the theorem.
\end{proof}

\section{Hexagonalizations\label{sec:inverse_hexagons}}

The main message of Proposition~\ref{prop:relation_w-p_and_varphi-edgeLength} is that an assignment valuation $f$ is uniquely determined by its set of two-dimensional maximizer sets of positive-type; see \cite[Lemma 3.4]{otsuka2023assignment}. This gives the idea to consider problems where the input is not the function itself but a decomposition of its domain into $\MN$-convex sets or, as we call them, hexagons.  

For a non-negative integer $\Phi\in\mathbb{Z}_{>0}$, a \emph{hexagonalization} of $T_\Phi$ is a partition $\cH = \{ H_i\}_{i=1}^q$ of $T_\Phi$ into $q$ internally disjoint $\MN$-convex sets $H_i \in \mathbb{Z}^2$. We refer to the members of $\cH$ as \emph{hexagons}; note that this is in line with Proposition~\ref{prop:sets_as_ineq} and its implications as described in Section~\ref{sec:preliminaries}. 
Since each hexagon is $\MN$-convex, those can be defined as in Proposition~\ref{prop:sets_as_ineq}, and therefore the notion of being positive-, negative- or zero-type can be extended. A hexagonalization $\cH = \{ H_i\}_{i=1}^q $ is said to be \emph{feasible} if every hexagon in $\cH$ is of positive- or zero-type, and there exists at least one hexagon of positive-type; see Figure~\ref{fig:feasible_hexag} for an example.

Note that, by Proposition~\ref{prop:characterization_assig_val}, the family of two-dimensional maximizer sets of any assignment valuation forms a feasible hexagonalization. Recall that $\cH^\cup$ denotes the union closure of $\cH$, while for an $\MN$-concave function $f$, $\mathcal{D}_f$  denotes its family of two-dimensional maximizers. We will need the following technical lemma.

\begin{lem}\label{lem:closure}
Let $\cH$ be a feasible hexagonalization and let $H\in\cH^\cup$ be an $\MN$-convex set. Then, $H$ is of positive- or zero-type. Furthermore, if at least one of the hexagons in $\cH$ whose union is $H$ is of positive-type, then $H$ is of positive-type. 
\end{lem}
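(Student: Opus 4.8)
The plan is to reduce the statement to an additivity ("conservation") property of the signed quantity
$\tau(S):=\ell_{\text{LH}}(S)-\ell_{\text{UH}}(S)=\tfrac1{\sqrt2}\bigl(\ell_{\text{URD}}(S)-\ell_{\text{LLD}}(S)\bigr)$, whose sign is exactly what distinguishes positive-, zero- and negative-type. Write $H=\bigcup_{i\in I}H_i$ with $\{H_i\}_{i\in I}\subseteq\cH$. The first — and I expect the hardest — step is to show that the lattice polytopes $\operatorname{conv}(H_i)$, $i\in I$, in fact tile $\operatorname{conv}(H)$: that is, $\bigcup_{i\in I}\operatorname{conv}(H_i)=\operatorname{conv}(H)$ with pairwise disjoint interiors. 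The inclusion $\subseteq$ and the disjointness of interiors are immediate from $H_i\subseteq H$ and from $\cH$ being a hexagonalization; the reverse inclusion is the delicate point. My approach would be to track how the boundary of $\operatorname{conv}(H)$ meets the members of $\cH$ and to use that $H$ itself is $\MN$-convex (so that $\operatorname{conv}(H)\cap\mathbb Z^2=H$ by Proposition~\ref{prop:sets_as_ineq}), ruling out that a region of $\operatorname{conv}(H)$ is covered only by hexagons outside the collection $\{H_i\}_{i\in I}$. This is the technical heart of the argument.

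Granting the tiling, I would prove the additivity $\tau(H)=\sum_{i\in I}\tau(H_i)$ by a boundary–matching argument on $R:=\operatorname{conv}(H)$. In a tiling of the hexagon $R$ by the hexagons $\operatorname{conv}(H_i)$, every maximal boundary segment of a tile is either a piece of $\partial R$ or is shared with a neighbouring tile. A URD-edge of a tile lies on an anti-diagonal line $x^1+x^2=\mu_0(H_i)$ and its LLD-edge on $x^1+x^2=\lambda_0(H_i)$, whereas the four remaining edge types are axis-parallel; hence a shared segment lying on an anti-diagonal line must be a URD-edge piece of the tile on its lower-left side and, simultaneously, an LLD-edge piece of the tile on its upper-right side (interior-disjointness plus convexity forbid a tile from straddling the line, and force the coincidence of the relevant $\mu_0$ and $\lambda_0$ values). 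Thus the "internal" URD-edge pieces and the "internal" LLD-edge pieces coincide as segments, so their total lengths are equal; by the same local argument the URD-edges lying on $\partial R$ exactly tile the URD-edge of $R$, and likewise for LLD. Summing edge lengths, $\sum_{i\in I}\ell_{\text{URD}}(H_i)=\ell_{\text{URD}}(R)+(\text{internal URD length})$ and $\sum_{i\in I}\ell_{\text{LLD}}(H_i)=\ell_{\text{LLD}}(R)+(\text{internal LLD length})$; subtracting and dividing by $\sqrt2$ yields $\sum_{i\in I}\tau(H_i)=\tfrac1{\sqrt2}\bigl(\ell_{\text{URD}}(R)-\ell_{\text{LLD}}(R)\bigr)=\tau(H)$. (Here I treat the hexagons as two-dimensional, which avoids the degenerate case of a tile whose URD- and LLD-edges coincide.)

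The conclusion is then immediate. Feasibility of $\cH$ gives $\tau(H_i)\ge 0$ for every $i\in I$, so $\tau(H)=\sum_{i\in I}\tau(H_i)\ge 0$, i.e.\ $H$ is of positive- or zero-type. Moreover, if some hexagon $H_{i_0}$ in the union is of positive-type, then $\tau(H_{i_0})>0$, and since all other terms are non-negative, $\tau(H)\ge\tau(H_{i_0})>0$, so $H$ is of positive-type, as claimed.
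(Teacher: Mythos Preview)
Your approach is essentially the same as the paper's: both arguments establish the additivity $\tau(H)=\sum_{i\in I}\tau(H_i)$ over the tiles and then read off the sign. The only real difference is cosmetic: the paper works with the vertical-edge formulation $\tau(S)=\ell_{\text{LV}}(S)-\ell_{\text{RV}}(S)$ rather than your diagonal-edge formulation, and simply asserts that an LV-edge of one tile inside $H$ is covered by RV-edges of neighbouring tiles (and vice versa), so that internal contributions cancel and only the boundary of $H$ survives. This is exactly your ``internal URD length $=$ internal LLD length'' argument transported to the vertical edges; the vertical version is marginally cleaner because each internal vertical segment is automatically an LV-piece on one side and an RV-piece on the other, with no need to invoke the $\mu_0/\lambda_0$ reasoning.

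On the tiling step you flag as the ``technical heart'': the paper does not give a more detailed argument than you do. It treats the edge-covering property as evident from $\cH$ being a hexagonalization of $T_\Phi$ (so the convex hulls of the $H_j$ already tile the big triangle with pairwise disjoint interiors) together with the $\MN$-convexity of $H$ (so that no tile $H_j$ with $j\notin I$ can meet the interior of $\operatorname{conv}(H)$, since any lattice point of such a tile would lie in $H$ but not in $\bigcup_{i\in I}H_i$). Your outlined approach is adequate at the paper's level of rigor; you are not missing any idea that the paper supplies.
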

\begin{proof} 
Let $H\in\cH^\cup$ be a hexagon and let $H_1,\dots,H_k$ denote the hexagons in $\cH$ whose union is $H$. Note that the right- and left-vertical edges of any of these hexagons are covered by the left- and right-vertical edges of the others, respectively, except for those edges that appear on the boundary of $H$; see Figure~\ref{fig:feasible_hexag} for an example. Hence, since $H_i$ is of positive- or zero-type, a simple counting argument shows that
\begin{align*}
    \ell_{LV}(H)-\ell_{RV}(H)
    {}&{} = \left [\sum_{i=1}^k \ell_{LV}(H_i)\right ]-\left [\sum_{i=1}^k \ell_{RV}(H_i)\right ]\\
    {}&{} =\sum_{i=1}^k [\ell_{LV}(H_i)-\ell_{RV}(H_i)]\\
    {}&{}\geq 0,
\end{align*}
implying that $H$ is also of positive- or zero-type. The second half of the statement follows by observing that if $H_i$ is of positive-type for some $i\in[k]$, then the inequality is strict.
\end{proof}

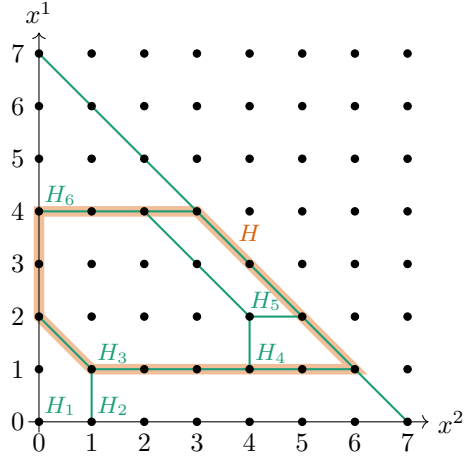
\begin{figure}[t]
    \centering
        \begin{tikzpicture}[scale=.7]
            \draw[MyOrange!40, line width = 4pt] (0,4) -- (3,4) --  (6,1) -- (1,1) -- (0,2) -- cycle; 
            
            \draw[domain=0:7, greenish, thick] plot (\x,{7-\x });         
            \draw[greenish, thick] (0,4) -- (3,4);        
            \draw[greenish, thick] (2,4) -- (4,2);
            \draw[greenish, thick] (4,2) -- (5,2);
            \draw[greenish, thick] (4,2) -- (4,1);
            \draw[greenish, thick] (0,2) -- (1,1);
            \draw[greenish, thick] (1,1) -- (6,1);
            \draw[greenish, thick] (1,1) -- (1,0);

            \draw[->] (-0.2,0) -- (7.4,0) node[right] {$x^2$};
            \draw[->] (0,-0.2) -- (0,7.4) node[above] {$x^1$};
            
            \foreach \x in {0,...,7}{%
                \node at (\x,-0.4) {\x};
                \node at (-0.4,\x) {\x};
                \foreach \y in {0,...,7}
                    {\draw [fill = black] (\x,\y) circle (2pt);}
                }    
                
            \node[greenish] at (0.4, 0.3) {\small $H_1$};
            \node[greenish] at (1.4,0.3) {\small$H_2$};
            \node[greenish] at (1.4,1.3) {\small$H_3$};
            \node[greenish] at (4.4,1.3) {\small$H_4$};  
            \node[greenish] at (4.3,2.3) {\small$H_5$};  
            \node[greenish] at (0.4,4.3) {\small$H_6$};
            \node[MyOrange] at (4,3.6) {\small$H$};
        \end{tikzpicture}
        \caption{Example for a feasible hexagonalization $\cH=\{H_1,\dots,H_6\}$. The hexagon $H$ with a yellow border is in $\cH^\cup$. Note that $H_3$ and $H_4$ are of positive-type, and so is $H$.}
        \label{fig:feasible_hexag}
\end{figure}

We are interested in the following problem.

\searchprob{Inverse-Hexagon}{A function $g\colon T_\Phi\to $ for some $\Phi\in\mathbb{Z}_{>0}$ and a feasible hexagonalization $\cH = \{ H_i\}_{i=1}^q $ of $T_\Phi$.}{Find an assignment valuation $f\colon T_\Phi\to\mathbb{R}$ such that $\mathcal{D}_f\subseteq \cH^\cup$ and $\|f-g\|$ is minimized.}

Using the structural observations established in~\cite{otsuka2023assignment}, we formulate this problem as a linear program. The idea is the following: the $\MN$-concavity condition is encoded with the help of Proposition~\ref{prop:MN_concave_function_ineq_characterization}, while the solution being an assignment valuation is enforced using Proposition~\ref{prop:characterization_assig_val} and Lemma~\ref{lem:closure} by ensuring that the two-dimensional maximizer sets form a coarsening of $\cH$. For the latter, recall that for an $\MN$-concave function $f$ and a maximizer set $D=D_f(p)$ corresponding to slope vector $p$, the value of $f(x)-\langle p,x\rangle$ is constant on $D$.

\begin{thm}
\textsc{Inverse-Hexagon} can be solved in polynomial time when $\|\cdot\|$ is the $\ell_1$- or $\ell_\infty$-norm.
\end{thm}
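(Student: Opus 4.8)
The plan is to write \textsc{Inverse-Hexagon} as a linear program in the variables $f(k,h)\in\mathbb{R}$ for $(k,h)\in T_\Phi$, exactly as in the proof of Theorem~\ref{thm:LP_MN_function}, and then add constraints that force the two-dimensional maximizer sets of $f$ to form a coarsening of $\cH$. First I would impose the three families of $\MN$-concavity inequalities of Proposition~\ref{prop:MN_concave_function_ineq_characterization}; this guarantees the solution is an $\MN$-concave function. Next, observe that $\mathcal D_f\subseteq\cH^\cup$ holds if and only if no two-dimensional maximizer set of $f$ cuts across the interior of a boundary edge shared by two adjacent hexagons of $\cH$ — equivalently, whenever two neighbouring hexagons $H_i,H_j$ lie in the same maximizer set of $f$, then the whole member of $\cH^\cup$ containing both also does. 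The cleanest way to encode this: for each hexagon $H_i\in\cH$ there should be a single slope vector $p_i\in\mathbb R^2$ such that $f(x)-\langle p_i,x\rangle$ is constant on $H_i$, and for each pair of hexagons sharing a full edge the constraint says that on that shared edge the two affine functions agree — but we only want to \emph{allow} merging, not force it, so we should not require $p_i=p_j$ globally. Instead I would use the characterization that $D_f(p)$ is an $\MN$-convex set for all $p$, together with the fact that $\cH$ is a refinement of $\mathcal D_f$: it suffices to demand that $f$ restricted to each hexagon $H_i$ is affine (i.e. $f$ agrees on $H_i$ with some affine function $x\mapsto\langle p_i,x\rangle+c_i$, with $p_i\in\mathbb R^2$ and $c_i\in\mathbb R$ as auxiliary variables), since then every maximizer set $D_f(p)$ is a union of hexagons of $\cH$, and by Lemma~\ref{lem:closure} it is of positive- or zero-type; finally one extra constraint $f(0,0)=0$ and a constraint guaranteeing at least one maximizer is of positive-type (equivalently, that the hexagonalization is not realized by a linear function alone) complete the description in view of Proposition~\ref{prop:characterization_assig_val}.

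Concretely, the LP has variables $f(k,h)$ for $(k,h)\in T_\Phi$, and $p_i^1,p_i^2,c_i$ for $i\in[q]$. Constraints: (i) the three $\MN$-concavity inequality families of Proposition~\ref{prop:MN_concave_function_ineq_characterization}; (ii) $f(x)=\langle p_i,x\rangle+c_i$ for every $i\in[q]$ and every $x\in H_i$ (a linear equation for each incidence); (iii) $f(0,0)=0$; (iv) a linear condition ensuring some maximizer set is two-dimensional and of positive-type, e.g. forcing, for a fixed positive-type hexagon $H_{i_0}\in\cH$ (one exists since $\cH$ is feasible), that the slope $p_{i_0}$ is "locally strict", which by Proposition~\ref{prop:relation_w-p_and_varphi-edgeLength}-type reasoning amounts to $p_{i_0}$ being distinct from the slopes of all neighbouring hexagons — this can be phrased as a strict inequality and handled by the standard trick of a small positive constant, or more robustly by observing that (ii)–(iii) already force the existence of a positive-type maximizer whenever $\cH$ has one, since the restriction of $f$ to $H_{i_0}$ being affine with the adjacent pieces having different slopes makes $H_{i_0}$ itself a maximizer set of positive-type. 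I would argue both directions of correctness: any assignment valuation $f$ with $\mathcal D_f\subseteq\cH^\cup$ is feasible for the LP (take $p_i$ to be the slope of the member of $\cH^\cup$ containing $H_i$), and conversely any LP-feasible $f$ is an $\MN$-concave function all of whose two-dimensional maximizer sets are unions of hexagons of $\cH$, hence of positive- or zero-type by Lemma~\ref{lem:closure}, with at least one positive-type, so $f$ is an assignment valuation by Proposition~\ref{prop:characterization_assig_val}, and $\mathcal D_f\subseteq\cH^\cup$ by construction. The objective $\|f-g\|$ is linearized exactly as in Theorem~\ref{thm:LP_MN_function}: for $\ell_1$, introduce $t_x\ge f(x)-g(x)$, $t_x\ge g(x)-f(x)$ and minimize $\sum_x t_x$; for $\ell_\infty$, a single $t$ with $t\ge\pm(f(x)-g(x))$. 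The LP has $O(|T_\Phi|+q)=O(\Phi^2)$ variables and $O(\Phi^2)$ constraints, and is solvable in polynomial time, e.g. by the ellipsoid or interior-point method.

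The subtle point, and the main obstacle, is the "at least one positive-type" requirement together with making sure that requiring $f$ affine on each $H_i$ does not \emph{over}-constrain the problem by implicitly forcing adjacent pieces to merge or by ruling out valid assignment valuations. The resolution is that if $f$ is an assignment valuation with $\mathcal D_f\subseteq\cH^\cup$, then on each $H_i$ the function $f$ is exactly the affine function $\langle p,\cdot\rangle+c$ where $p$ is the slope of the maximizer set $D_f(p)\in\cH^\cup$ containing $H_i$ and $c$ is the corresponding constant $f(x)-\langle p,x\rangle$; so feasibility is never lost, and the "positive-type" witness is automatic from feasibility of $\cH$ provided we phrase (iv) correctly — the safest phrasing is to not add (iv) as a strict inequality at all but rather to note that (ii) forces the maximizer sets of $f$ to be precisely the maximal unions of hexagons on which $f$ is affine, and since $g$ (hence any near-optimal $f$) together with the hexagonalization structure guarantees a genuine break in slope, one gets a positive-type piece; if one is worried about the degenerate all-affine $f$, one adds the single strict inequality separating the slopes across one fixed interior edge of a positive-type hexagon, which by a limiting/perturbation argument does not change the optimal value. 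I expect handling this corner case cleanly — i.e. ruling out the degenerate globally-affine solution without excluding genuine optima — to be the only place requiring care; everything else is the routine LP-linearization already used in Theorem~\ref{thm:LP_MN_function}.
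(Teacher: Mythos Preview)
Your linear program is essentially the paper's: variables $f(k,h)$, slope/constant pairs $(p_i,c_i)$ per hexagon, the three $\MN$-concavity inequality families, the affinity constraint $f(x)=\langle p_i,x\rangle+c_i$ on each $H_i$, and $f(0,0)=0$. The only structural difference is that the paper also writes the global constraint $f(x)-\langle p_i,x\rangle\le d_i$ for all $x\in T_\Phi$ and all $i$, i.e.\ it explicitly forces $H_i\subseteq D_f(p_i)$. This extra family is in fact redundant once $\MN$-concavity and affinity on the two-dimensional $H_i$ are imposed (the affine piece is a supporting hyperplane of the concave closure), so your LP and the paper's LP have the same feasible set.

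Where your write-up has a genuine gap is the ``at least one positive-type maximizer'' condition, which you correctly flag as the obstacle but then resolve with the wrong tools: strict inequalities are not LP constraints, perturbation with a small constant changes the problem, and the argument that ``$g$ guarantees a genuine break in slope'' is simply false since $g$ is arbitrary. The clean resolution---which the paper sets up via the second clause of Lemma~\ref{lem:closure}---is that no extra constraint is needed at all. Since $\cH$ is feasible it contains a positive-type hexagon $H_{i_0}$; positive-type forces $H_{i_0}$ to be two-dimensional; by $\MN$-concavity plus affinity on $H_{i_0}$ one gets $H_{i_0}\subseteq D_f(p_{i_0})$, so $D_f(p_{i_0})$ is a two-dimensional maximizer set lying in $\cH^\cup$ and containing a positive-type hexagon, hence itself positive-type by Lemma~\ref{lem:closure}. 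Proposition~\ref{prop:characterization_assig_val} then applies directly. Replace your item~(iv) and the surrounding discussion by this two-line argument and the proof is complete.
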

\begin{proof}
Consider the following problem with variables $f(k,h)\in\mathbb{R}$ for $(k,h)\in T_\Phi$, $d_i\in\mathbb{R}$ for $i\in[q]$, and $p_i\in\mathbb{R}^2$ for $i\in[q]$:
     \begin{align}
          \min\  &\| g-f\| && \nonumber\\
        \text{s.t.}~& f(k,h) + f(k+1,h+1) \leq f(k+1,h) + f(k,h+1) && \hspace{-0.2cm}\forall (k,h) \in T_{\Phi-2}, \label{eq:LP_hexagons01}\\
        & f(k,h+1) + f(k+2,h) \leq f(k+1,h+1) + f(k+1,h) && \hspace{-0.2cm}\forall (k,h) \in T_{\Phi-2}, \label{eq:LP_hexagons02}\\
        & f(k+1,h) + f(k,h+2) \leq f(k+1,h+1) + f(k,h+1)  && \hspace{-0.2cm}\forall (k,h) \in T_{\Phi-2}, \label{eq:LP_hexagons03}\\
        & f(x) - \langle p_i, x \rangle= d_i  && \hspace{-0.2cm}\forall x \in  H_i, \forall i\in [q],   \label{eq:LP_hexagons04}\\
        & f(x) - \langle p_i, x \rangle\leq d_i  && \hspace{-0.2cm}\forall x \in  H_j, \forall i,j\in [q],   \label{eq:LP_hexagons05}\\
        & f(0,0)=0.\label{eq:LP_hexagons06}
    \end{align}    
    Here, constraints \eqref{eq:LP_hexagons01}--\eqref{eq:LP_hexagons03} ensure that the solution $f\colon T_\Phi\to\mathbb{R}$ is an $\MN$-concave function. Constraint~\eqref{eq:LP_hexagons04} makes sure that the value of $f(x)-\langle p_i,x_i\rangle$ is a constant on $H_i$ for each $i\in [q]$, while constraint~\eqref{eq:LP_hexagons05} leads to the elements of $H_i$ maximizing $f(x)-\langle p_i,x_i\rangle$. Finally, constraint~\eqref{eq:LP_hexagons06} encodes a property that holds for every assignment valuation. By Proposition~\ref{prop:characterization_assig_val}, these together imply that the resulting $f$ is an assignment valuation as required.
    
    Note that the number of variables is $|T_\Phi| + 3q$ and the number of constraints is bounded from above by $3|T_{\Phi-2}| + 6|T_\Phi|+q|T_\Phi|$. Indeed, it is not difficult to check that every $x\in T_\Phi$ is contained in at most six members of $\cH$. Similarly to the proof of Theorem~\ref{thm:LP_MN_function}, the above is not an linear program yet as the objective function is not linear, but the problem can be written as a linear program for both the $\ell_1$- and $\ell_\infty$-norm objectives while increasing the numbers of variables and constraints by at most $|T_\Phi|$ and $2|T_\Phi|$, respectively. This concludes the proof of the theorem.
\end{proof}

\section{Conclusions}

In this paper, we considered a natural inverse problem related to bivariate assignment valuations. Building on the work of Otsuka and Shioura~\cite{otsuka2023assignment}, we derived hardness for the problem, while we provided linear programming formulations in certain settings that lead to efficient algorithms for solving those. Analogous problems can be formulated with the additional requirement that the function $f$ has to take integer values. For such problems, the proposed linear programming formulations do not suffice, hence understanding their complexities is an interesting research direction.

\medskip

\paragraph*{Acknowledgement.} The work was supported by the Lend\"ulet Programme of the Hungarian Academy of Sciences -- grant number LP2021-1/2021, by the Ministry of Innovation and Technology of Hungary -- grant number ELTE TKP 2021-NKTA-62, and by Dynasnet European Research Council Synergy project -- grant number ERC-2018-SYG 810115.

\bibliographystyle{abbrv}
\bibliography{inverse_assignment.bib}

\end{document}